\newtheorem{theorem}{Theorem}
\newtheorem{remark}{Remark}
\title{Fourier Beyond Dispersion: Wavenumber Explicit and Precise Accuracy of FDMs for the Helmholtz
  Equation}
\author{Hui Zhang$^*$\\
  \small{School of Mathematics and
    Physics, Xi'an Jiaotong-Liverpool University, Ren'ai Rd. 111, Suzhou 215123, China}\\
  \small{Email: hui.zhang@xjtlu.edu.cn}}
\date{\small{\today}}
\begin{document}
\maketitle

\begin{abstract}
  We propose a practical tool for evaluating and comparing the accuracy of FDMs for the Helmholtz
  equation. The tool based on Fourier analysis makes it easy to find wavenumber explicit order of
  convergence, and can be used for rigorous proof. It fills in the gap between the dispersion
  analysis and the actual error with source term.
\end{abstract}

{\bf Keywords}\;\; Fourier analysis, finite difference, Helmholtz equation, dispersion analysis,
pollution effect

{\bf MSC 2020}\;\; 65N06,  65N12,  65N15

\section{Introduction.}

Discretization of the Helmholtz equation suffers from the `pollution effect':``the accuracy of the
numerical solution deteriorates with increasing non-dimensional wave number
$k$''\cite{deraemaeker1999dispersion}. It is related to that the numerical solution ``has a
wavelength that is different from the exact one''\cite{deraemaeker1999dispersion} called
`dispersion'. Albeit useful \cite{stolk2016dispersion,wu2017dispersion,cocquet2021closed},
dispersion analysis \cite{deraemaeker1999dispersion} has not a trivial path to the total error
estimate \cite{babuska1997pollution} with source term. In particular, the wavenumber explicit
estimate is largely missing in the FDM literature \cite{fu2008compact, wang2014pollution}. Inspired
by \cite{dwarka2021pollution}, we propose a Fourier analysis approach to precise error
estimation. We shall illustrate the approach in 1D. But it is applicable in 2D and 3D. It also
provides a framework for rigorous proof of sharp error estimates. Those extensions will appear
elsewhere.

\section{Analysis in 1D}

We consider the model problem with homogeneous Dirichlet boundary condition
\begin{equation}\label{helm1d}
  \mathcal{H}u:=u''+k^2u = f\text{ in }\Omega:=(0,1), \quad u(0)=u(1)=0.
\end{equation}
We assume $f\in H^p_0(0,1)$, $p\ge 1$ and $k\not\in\pi\mathbb{Z}$. Then \eqref{helm1d} has a unique
solution $u\in H^{p+2}(0,1)\cap H^1_0(0,1)$. It holds
\[
  f(x)=\sum_{n=1}^{\infty}\hat{f}_n\sin (n\pi x),\quad %
  \hat{f}_n:=2\int_0^1f(x)\sin(n\pi x)~\mathrm{d}x,\quad
  u(x)=\sum_{n=1}^{\infty}\hat{u}_n\sin(n\pi x),\quad %
  \hat{u}_n:=2\int_0^1u(x)\sin (n\pi x)~\mathrm{d}x.
\]
From the equation \eqref{helm1d}, we get
$\hat{u}_n=\frac{1}{k^2-\xi_n^2}\hat{f}_n=:\hat{\mathcal{H}}_n^{-1}\hat{f}_n$ where $\xi_n=n\pi$.

We discretize the domain $(0,1)$ into the uniform grids $\Omega_h:=\{x_j=jh, j=1,\ldots,N-1\}$ with
$h=\frac{1}{N}$ and $N\ge 4$ an integer. Denote a finite difference scheme by
$\mathcal{H}^hu^h=\mathcal{R}^hf=:f^h$ where $u^h$ and $\mathcal{R}^hf$ are functions on $\Omega_h$.
We have the discrete Fourier transform
\[
  f^h(x)=\sum_{n=1}^{N-1}\hat{f}^h_n\sin (n\pi x),\quad %
  \hat{f}^h_n:=\frac{2}{N}\sum_{j=1}^{N-1}f^h(x_j)\sin(n\pi x_j),\quad
\]
\begin{equation}\label{uh}
  u^h(x)=\sum_{n=1}^{N-1}\hat{u}_n^h\sin (n\pi x),\quad %
  \hat{u}^h_n:=\frac{2}{N}\sum_{j=1}^{N-1}u^h(x_j)\sin (n\pi x_j),\quad
\end{equation}
where the formulae of $f^h(x)$ and $u^h(x)$ hold originally for $x\in\Omega_h$ but is also used for
extended definition of $f^h(x)$ and $u^h(x)$ at $x\in\Omega=(0,1)$. From $\mathcal{H}^hu^h=f^h$, we
shall be able to find $\hat{u}^h_n=(\hat{\mathcal{H}}^h_n)^{-1}\hat{f}_n^h$. The error $u-u^h$ is
then
\[
  u-u^h = \sum_{n=1}^{N-1}(\hat{u}_n-\hat{u}_n^h)\sin(n\pi x) +
  \sum_{n=N}^{\infty}\hat{u}_n\sin(n\pi x).
\]
For simplicity, we assume the source $f$ is well resolved on $\Omega_h$ i.e. $f$ is band limited
with $\hat{f}_n=0$ for $n\ge N$. It follows that $f$ and its first and high order derivatives have
the Fourier expansion
\[
  f(x)=\sum_{n=1}^{N-1}\hat{f}_n\sin (n\pi x),\quad %
  f^{(2q)}(x)=\sum_{n=1}^{N-1}(-\xi_n^2)^{q}\hat{f}_n\sin(n\pi x),\quad%
  f^{(2q+1)}(x)=\sum_{n=1}^{N-1}(-1)^q\xi_n^{2q+1}\hat{f}_n\cos(n\pi x), %
  \quad\xi_n=n\pi.
\]
An analysis including under resolved $f$ will appear elsewhere. For band limited $f$, the error
becomes
\begin{equation}\label{err1d}
  u-u^h = \sum_{n=1}^{N-1}(\hat{u}_n-\hat{u}_n^h)\sin(n\pi x)%
  =\sum_{n=1}^{N-1}\left(\hat{\mathcal{H}}_n^{-1}-(\hat{\mathcal{H}}_n^h)^{-1}\hat{\mathcal{R}}^h_n\right)\hat{f}_n\sin(n\pi x),
\end{equation}
where the Fourier transform of $\mathcal{R}^hf$ is supposed to be
$\widehat{(\mathcal{R}^hf)}_n=\hat{\mathcal{R}}^h_n\hat{f}_n$.

\begin{theorem}\label{1derr}
  Suppose $k\not\in\pi\mathbb{Z}$, $f\in H^p_0(0,1)$ and $f$ is band limited with the Fourier
  coefficients $\hat{f}_n=0$ for $n\ge N$ for an even integer $N\ge 4$. Let $u$ be the solution of
  \eqref{helm1d}. Suppose a linear finite difference equation $\mathcal{H}^hu^h=\mathcal{R}^hf$ on
  $\Omega_h$ with $u^h(0)=u^h(1)=0$ has a unique solution $u^h$, where
  $\Omega_h:=\{x_j=jh, j=1,\ldots,N-1\}$ and $h=\frac{1}{N}$. Let $u^h$ be extended by the discrete
  Fourier transform to $\Omega=(0,1)$. Then we have
  \[
    \|u-u^h\|_{L^2(0,1)}\le |f|_{H^p(0,1)}\max_{n=1,..,N-1}%
    \left|\hat{\mathcal{H}}_n^{-1}-(\hat{\mathcal{H}}_n^h)^{-1}\hat{\mathcal{R}}^h_n\right|%
    /|\xi_n|^p,
  \]
  \[
    |u-u^h|_{H^1(0,1)}\le |f|_{H^p(0,1)}\max_{n=1,..,N-1}%
    \left|\hat{\mathcal{H}}_n^{-1}-(\hat{\mathcal{H}}_n^h)^{-1}\hat{\mathcal{R}}^h_n \right|%
    /|\xi_n|^{p-1},
  \]
  where $|v|_{H^p(0,1)}=\|v^{(p)}\|_{L^2(0,1)}$ is the $H^p$-semi-norm, $\xi_n=n\pi$, and both upper
  bounds are attainable.
\end{theorem}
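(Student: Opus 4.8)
The plan is to read everything off the diagonalised error representation \eqref{err1d}; since the setup has already reduced the problem to the scalar Fourier symbols, what remains is essentially Parseval's identity plus the extraction of a maximum. Write $g_n:=\hat{\mathcal{H}}_n^{-1}-(\hat{\mathcal{H}}_n^h)^{-1}\hat{\mathcal{R}}^h_n$, so that $u-u^h=\sum_{n=1}^{N-1}g_n\hat{f}_n\sin(n\pi x)$ is a \emph{finite} trigonometric sum. Using $\int_0^1\sin(n\pi x)\sin(m\pi x)\,\mathrm{d}x=\tfrac12\delta_{nm}$, Parseval gives $\|u-u^h\|_{L^2(0,1)}^2=\tfrac12\sum_{n=1}^{N-1}|g_n|^2|\hat{f}_n|^2$; differentiating the finite sum term by term and using the analogous relation for cosines gives $|u-u^h|_{H^1(0,1)}^2=\tfrac12\sum_{n=1}^{N-1}\xi_n^2|g_n|^2|\hat{f}_n|^2$.

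Next I would rewrite the source seminorm in Fourier variables. Feeding the derivative expansions for band-limited $f$ recorded just before the theorem, $f^{(2q)}=\sum_n(-\xi_n^2)^q\hat{f}_n\sin(n\pi x)$ and $f^{(2q+1)}=\sum_n(-1)^q\xi_n^{2q+1}\hat{f}_n\cos(n\pi x)$, into the same orthogonality relations yields, for $p$ of either parity, $|f|_{H^p(0,1)}^2=\|f^{(p)}\|_{L^2(0,1)}^2=\tfrac12\sum_{n=1}^{N-1}|\xi_n|^{2p}|\hat{f}_n|^2$. The one thing to watch is that the constant $\tfrac12$ is common to all three identities, so it cancels.

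The estimates then follow by factoring the summands. For the $L^2$ bound write $|g_n|^2|\hat{f}_n|^2=\bigl(|g_n|^2/|\xi_n|^{2p}\bigr)|\xi_n|^{2p}|\hat{f}_n|^2$, bound the first factor by $\max_{1\le n\le N-1}|g_n|^2/|\xi_n|^{2p}$, and identify the remaining sum with $2|f|_{H^p(0,1)}^2$; for the $H^1$ bound do the same with $\xi_n^2|g_n|^2|\hat{f}_n|^2=\bigl(|g_n|^2/|\xi_n|^{2p-2}\bigr)|\xi_n|^{2p}|\hat{f}_n|^2$ and the maximum of $|g_n|^2/|\xi_n|^{2p-2}$ (meaningful since $p\ge1$). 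Taking square roots gives the two displayed inequalities.

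For attainability, let $n^\star$ realise $\max_{1\le n\le N-1}|g_n|/|\xi_n|^p$ and take $f(x)=\sin(n^\star\pi x)$, which is band-limited and admissible. Then $u-u^h=g_{n^\star}\sin(n^\star\pi x)$, so $\|u-u^h\|_{L^2(0,1)}=|g_{n^\star}|/\sqrt{2}$ while $|f|_{H^p(0,1)}=|\xi_{n^\star}|^p/\sqrt{2}$, and the first bound is an equality; replacing $n^\star$ by a maximiser of $|g_n|/|\xi_n|^{p-1}$ does the same for the second. There is no serious obstacle here — the only points needing attention are the consistent bookkeeping of the $\tfrac12$ normalisation so that it cancels across all three Parseval identities, and the observation that a pure single mode (generally a different one for the $L^2$ and $H^1$ bounds) is an admissible source that saturates the estimate.
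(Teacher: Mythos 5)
Your proof is correct and follows essentially the same route as the paper: Parseval's identity applied to the error representation \eqref{err1d}, with the factor $|\xi_n|^{p}$ (resp.\ $|\xi_n|^{p-1}$) pulled out as a maximum and the remaining weighted sum identified with $2|f|_{H^p(0,1)}^2$. You additionally make the attainability explicit via a single-mode source, which the paper only asserts and exploits in Remark~\ref{f1d}, so your write-up is, if anything, more complete than the paper's sketch.
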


\begin{proof}
  The first inequality follows from the Parseval's identity in view of \eqref{err1d}
  \[
    2\|u-u^h\|_{L^2{(0,1)}}^2=\sum_{n=1}^{N-1}
    \left|\hat{\mathcal{H}}_n^{-1}-(\hat{\mathcal{H}}_n^h)^{-1}\hat{\mathcal{R}}^h_n\right|^2
    |\hat{f}_n|^2.
  \]
  The other inequalities can be proved in a similar way.
\end{proof}

\begin{remark}\label{f1d}
  For an $m$th order scheme, we usually choose $p=m-2$ in Theorem~\ref{1derr}. Sometimes, it is
  useful to choose $p=m-1$ for the $H^1$-semi-norm of the error. We shall see when $p\ge 1$ the
  $H^p$-semi-norm of $f$ does contribute extra powers of $k$ to the upper bound, because 1) the
  upper bound is attained when $\hat{f}_n$ is supported on the frequency where the maximum term in
  the upper bound is attained, and 2) the maximum term is often attained at a frequency near the
  wavenumber $k$. More precisely, if the upper bound is attained when $f=\sin(\xi x)$ with $\xi$ of
  order $k$ then $|f|_{H^p(0,1)}$ is of order $k^p$. Thus, we consider as hard as possible a
  Helmholtz problem for each wavenumber $k$.
\end{remark}

\subsection{FDMs in 1D and their symbols}
\label{fdm1d}

The baseline is of course the classical centered scheme
\[%\begin{equation}\label{cls1d}
  \mathcal{H}_{cls}^hu^h:=({k}^2-\frac{2}{h^2})u^h(x_j) + \frac{1}{h^2}u^h(x_{j-1})+ %
  \frac{1}{h^2}u^h(x_{j+1})=f(x_j),\quad x_j\in\Omega_h.
\]%\end{equation}
Its left hand side symbol is $\hat{\mathcal{H}}^h_{cls,n}=k^2-\frac{4}{h^2}\sin^2\frac{\xi_nh}{2}$
with $\xi_n=n\pi$, and right hand side symbol is $\hat{\mathcal{R}}^h_{cls,n}=1$.  It is well known
that the dispersion error in 1D is avoidable; see \cite{babuska1997pollution}. Arbitrarily high
order 3-point schemes without dispersion in 1D was derived \cite{wang2014pollution}. We shall study
those schemes of order 2, 4, and 6. The 2nd order dispersion free scheme reads
\[%\begin{equation}\label{df2}
  \mathcal{H}_{df}^hu^h:={k}^2u^h(x_j) + \frac{k^2}{\tilde{k}^2}
  \left[-\frac{2}{h^2}u^h(x_j)+ \frac{1}{h^2}u^h(x_{j-1})+ %
    \frac{1}{h^2}u^h(x_{j+1})\right]=f(x_j),\quad x_j\in\Omega_h,
\]%\end{equation}
where $\tilde{k}=\frac{2}{h}\sin\frac{kh}{2}$. Its left hand side symbol is
$\hat{\mathcal{H}}^h_{df,n}=k^2-\frac{k^2}{\tilde{k}^2}\frac{4}{h^2}\sin^2\frac{\xi_nh}{2}$. The
fourth order dispersion free scheme uses the same left hand side operator as the second order scheme
uses but different right hand side operator:
\[%\begin{equation}\label{df4}
  \mathcal{H}_{df}^hu^h=f(x_j)+\left(\frac{1}{\tilde{k}^2}-\frac{1}{k^2}\right)f''(x_j)%
  =:(\mathcal{R}_{df4}^hf)(x_j),\quad x_j\in\Omega_h.
\]%\end{equation}
Its right hand side symbol is
$\hat{\mathcal{R}}^h_{df4,n}=1-\xi_n^2\left(\frac{1}{\tilde{k}^2}-\frac{1}{k^2}\right)$.  For the
sixth order dispersion free scheme, the left hand side is the same as before, and the right hand
side operator is
\[
  (\mathcal{R}_{df6}^hf)(x_j) = (\mathcal{R}_{df4}^hf)(x_j) + \left[\frac{1}{k^4} + %
  \left(\frac{h^2}{12}-\frac{1}{k^2}\right)\frac{1}{\tilde{k}^2}\right]f^{(4)}(x_j).
\]
So its symbol is $ \hat{\mathcal{R}}^h_{df6,n}=\hat{\mathcal{R}}^h_{df4,n} + %
\xi_n^4\left(\frac{h^2}{12}-\frac{1}{k^2}\right)\frac{1}{\tilde{k}^2}.  $

\subsection{Accuracy of FDMs in 1D}
\label{acc}

According to \eqref{err1d} and Theorem~\ref{1derr}, the error is essentially determined by
\[
  \psi_p(\xi_n):=\left|\hat{\mathcal{H}}_n^{-1}-(\hat{\mathcal{H}}_n^h)^{-1}
    \hat{\mathcal{R}}^h_n\right|/|\xi_n|^p,\quad \xi_n=n\pi, \quad n=1,\ldots,N-1.
\]
Recall Remark~\ref{f1d} that if $\max_n\psi_p(\xi_n)$ for the $L^2$-norm error, or
$\max_n\psi_{p-1}(\xi_n)$ for the $H^1$-semi-norm error, is attained near $k$ then $|f|_{H^p(0,1)}$
will be of order $k^p$. In that case, we shall multiply $\psi_p$ or $\psi_{p-1}$ with $k^p$ to
represent (up to a constant factor) the sharp upper bound of the $L^2$-norm or $H^1$-semi-norm
error. This gives rise to Fig.~\ref{1dfig} for the FDMs described in Sect.~\ref{fdm1d}, where
$p=m-2$ is used for an $m$th order (in $h$) scheme. In the first (second) column of the figure, we
show $\psi_p$ ($\psi_{p-1}$) for $L^2$- ($H^1$-semi-) norm error, from which, we can see clearly how
the error is distributed over the Fourier frequencies $\xi_n$. In particular, all $\max_n\psi_{p}$
are attained at a $\xi_n$ near $k$. In the third column, we present the scaling results of
$k^p\max\psi$.

The first row illustrates the convergence of classical scheme in term of $h$-refinement with fixed
$k$, which confirms the order $O(h^2)$ for both $L^2$- (indicated by $\max\psi_0$) and $H^1$-semi-
(indicated by $\max\psi_{-1}$) norms of the error. Since the order in $h$ is well-known, we shall
not illustrate that for the other schemes.

The second row is again for the classical scheme, but $k$ doubles while $N=h^{-1}$ quadruples. It is
seen that $\psi_{-1}$ converges slower than $\psi_{0}$ in this setting. In particular, the subfigure
in row 2 and column 3 shows that the order of $L^2$-norm error (indicated by $\max\psi_0$) is
$k^2h^2$ for the Fourier interpolated $u^h$ in \eqref{uh}, while the plots for $\max\psi_{-1}$ needs
some explanation. While $k$ doubles and $h^{-1}$ quadruples simultaneously, it follows that $kh$
halves, and $\max\psi_{-1}$ halves (linear in $kh$) seen from the subfigure. Since the order in $h$
is 2, the order in $k$ and $h$ should be $k^{\alpha}h^2$ for some $\alpha$. Since
$(2k)^{\alpha}(h/4)^2=k^{\alpha}h^22^{\alpha-4}$, we should have $\alpha-4=-1$ i.e. $\alpha=3$ and
the order of $H^1$-semi-norm error is $k^3h^2$. The conclusion drawn here can be proved rigorously
based on Theorem~\ref{1derr}, which will appear elsewhere.

The other rows are for the dispersion free schemes \cite{wang2014pollution} described in
Sect~\ref{fdm1d}. It is seen that the $L^2$-norm error converges faster than the $H^1$-semi-norm
error in the setting that $k$ doubles while $h^{-1}$ quadruples. Since the order in $h$ is known,
the scaling with `wrong' order in $h$ in column 3 subfigures need some explanation. For the 2nd
order scheme, the $k^3h^3$ scaling means $k^2\max\psi_0$ scales down by one eighth when $k$ doubles
and $h$ quadruples simultaneously. So $(2k)^{\alpha}(h/4)^2=k^{\alpha}h^22^{\alpha-4}$ should have
$\alpha-4=-3$ i.e.  $\alpha=1$ and the $L^2$-norm error is of order $kh^2$. Similarly, the 4th (6th)
order scheme has the $L^2$-norm error of order $k^{3}h^4$ ($k^5h^6$) from $\alpha-8=-5$
($\alpha-12=-7$).

\begin{figure}
  \centering
  \includegraphics[height=12em,width=15em,trim=42 180 70 180,clip]{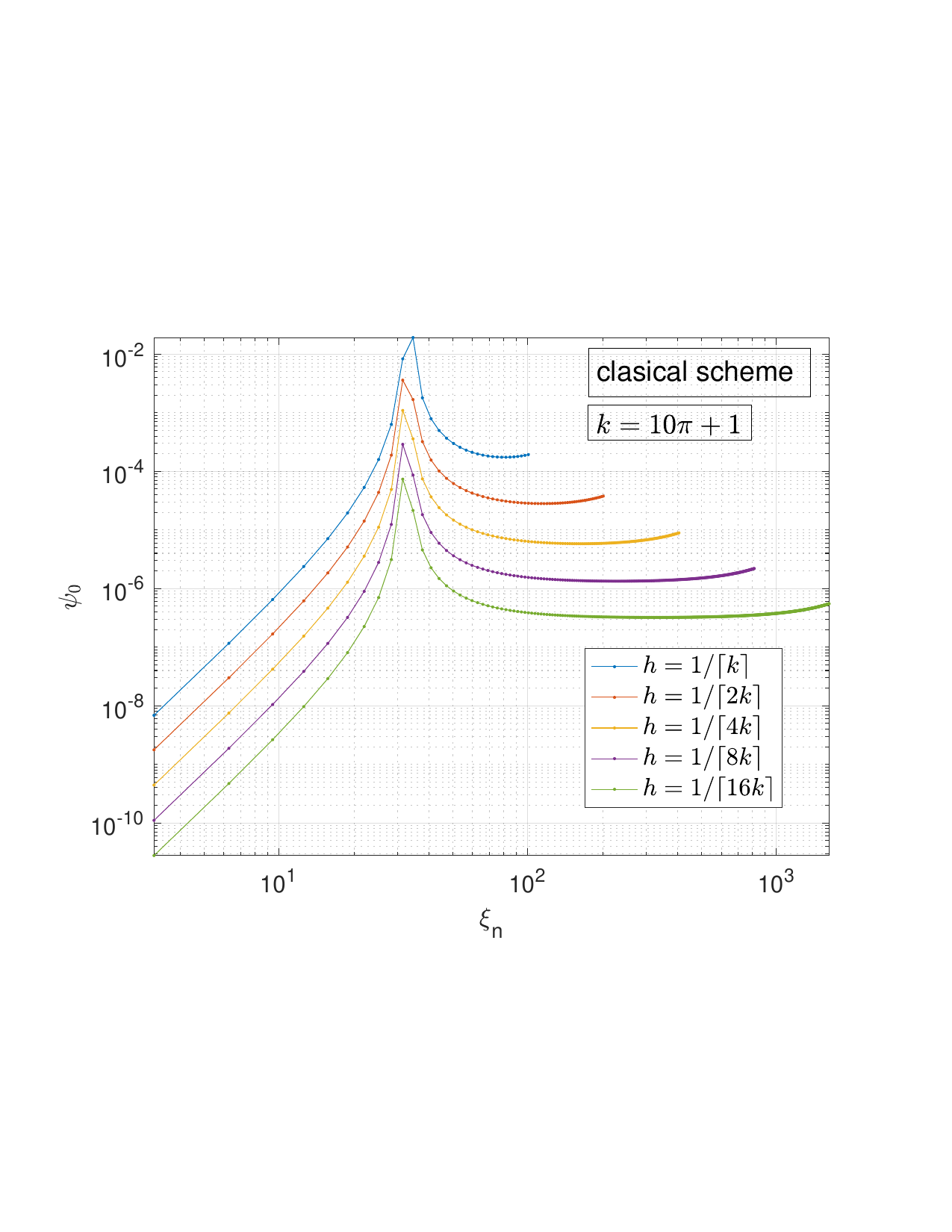} 
  \includegraphics[height=12em,width=15em,trim=42 180 70 180,clip]{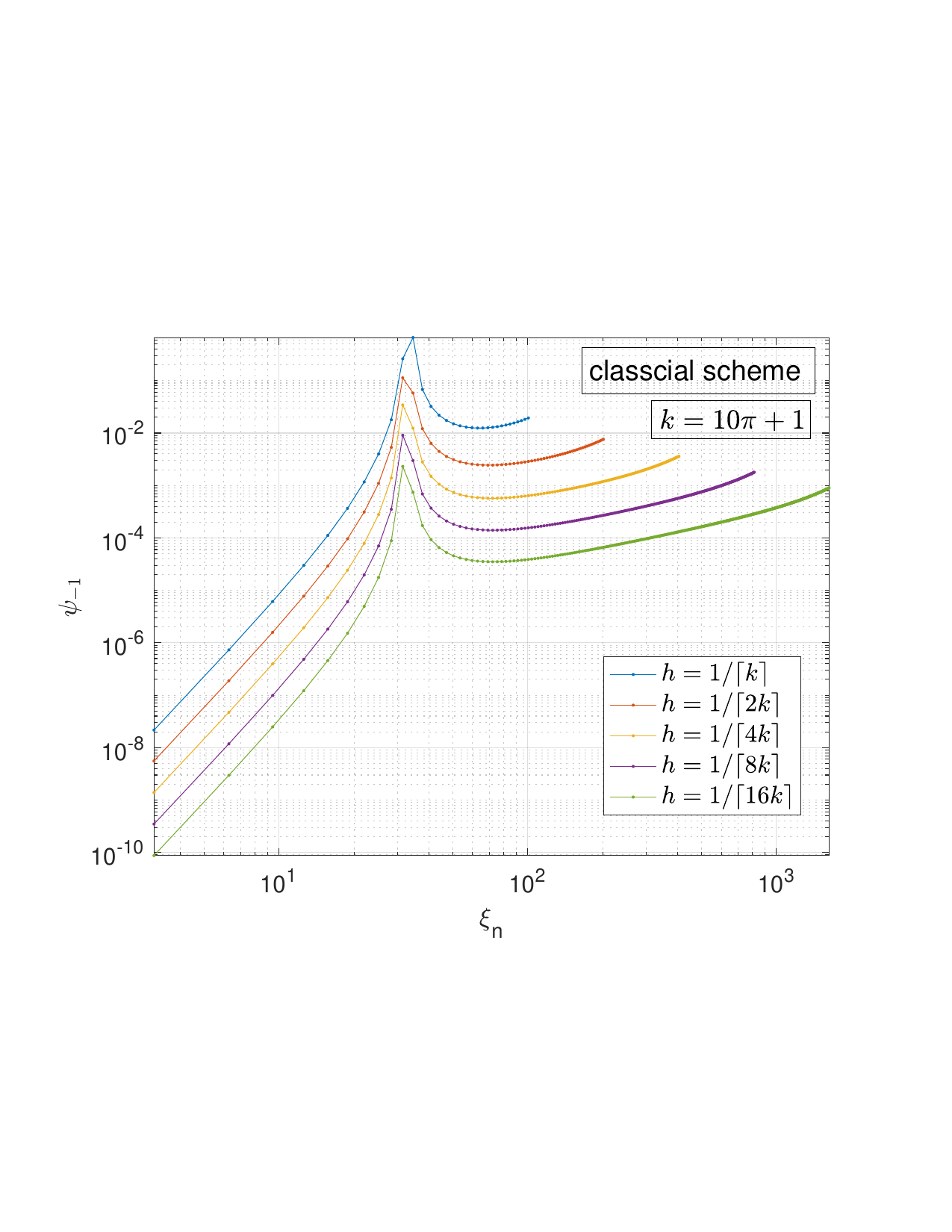}
  \includegraphics[height=12em,width=15em,trim=45 180 65 180,clip]{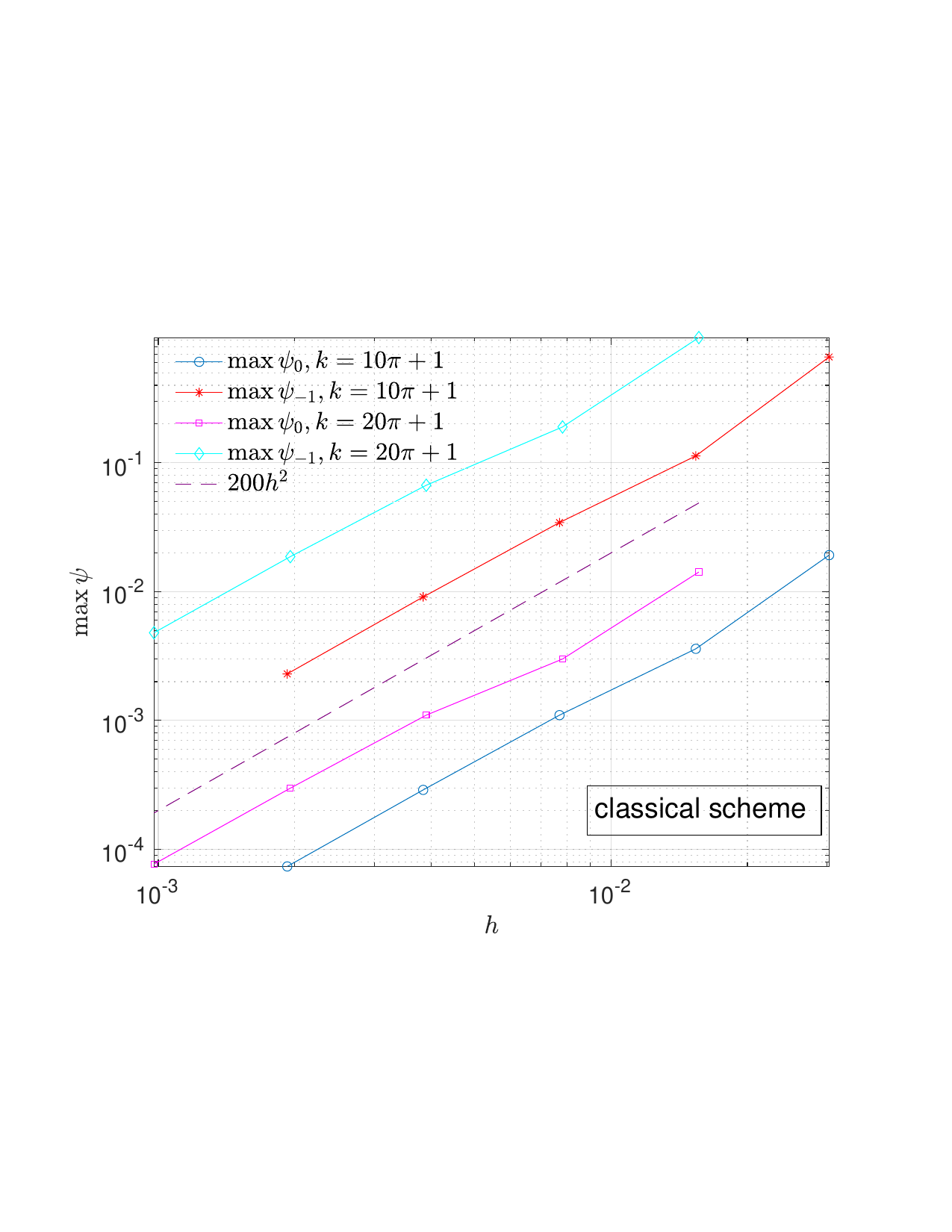}
  \includegraphics[height=12em,width=15em,trim=45 180 70 180,clip]{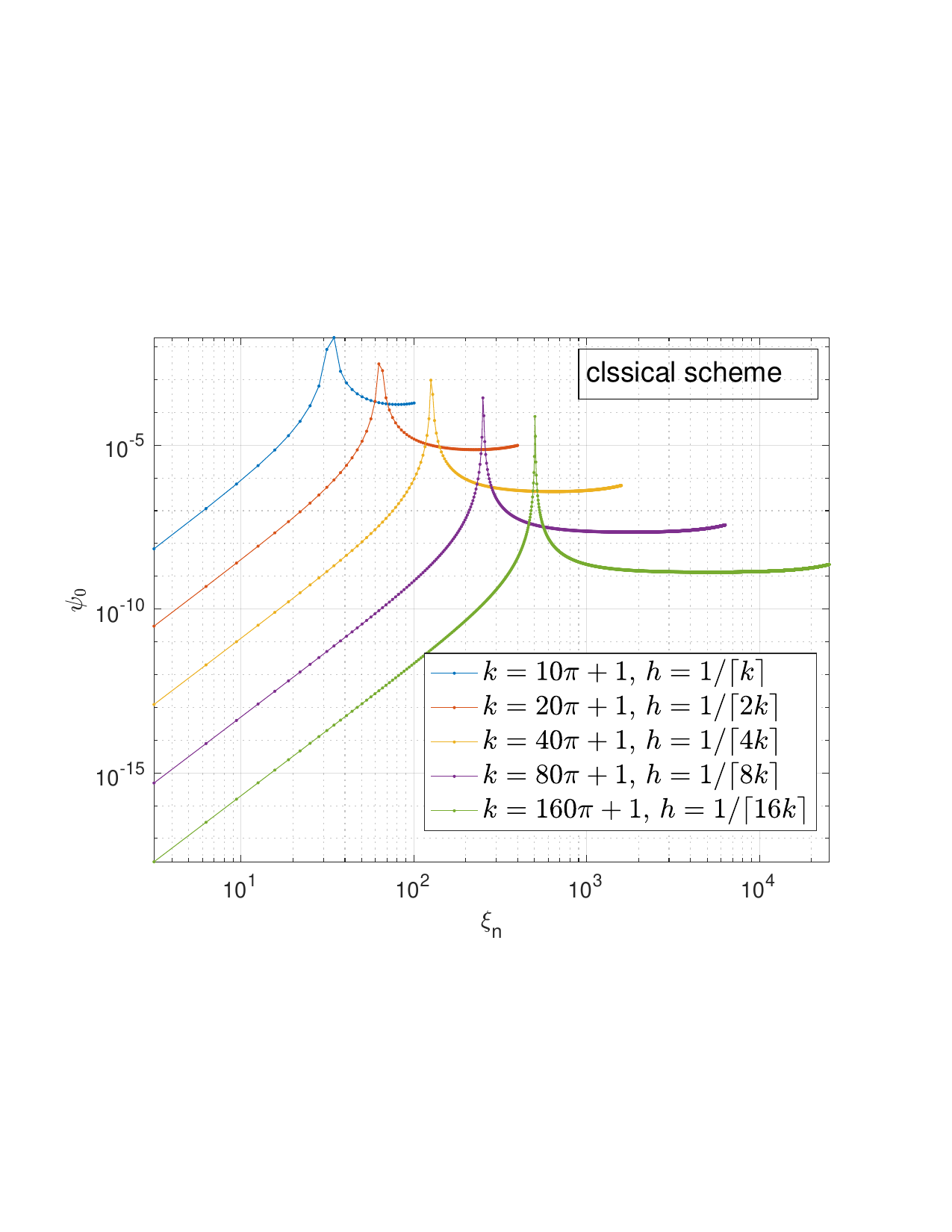} 
  \includegraphics[height=12em,width=15em,trim=45 180 70 180,clip]{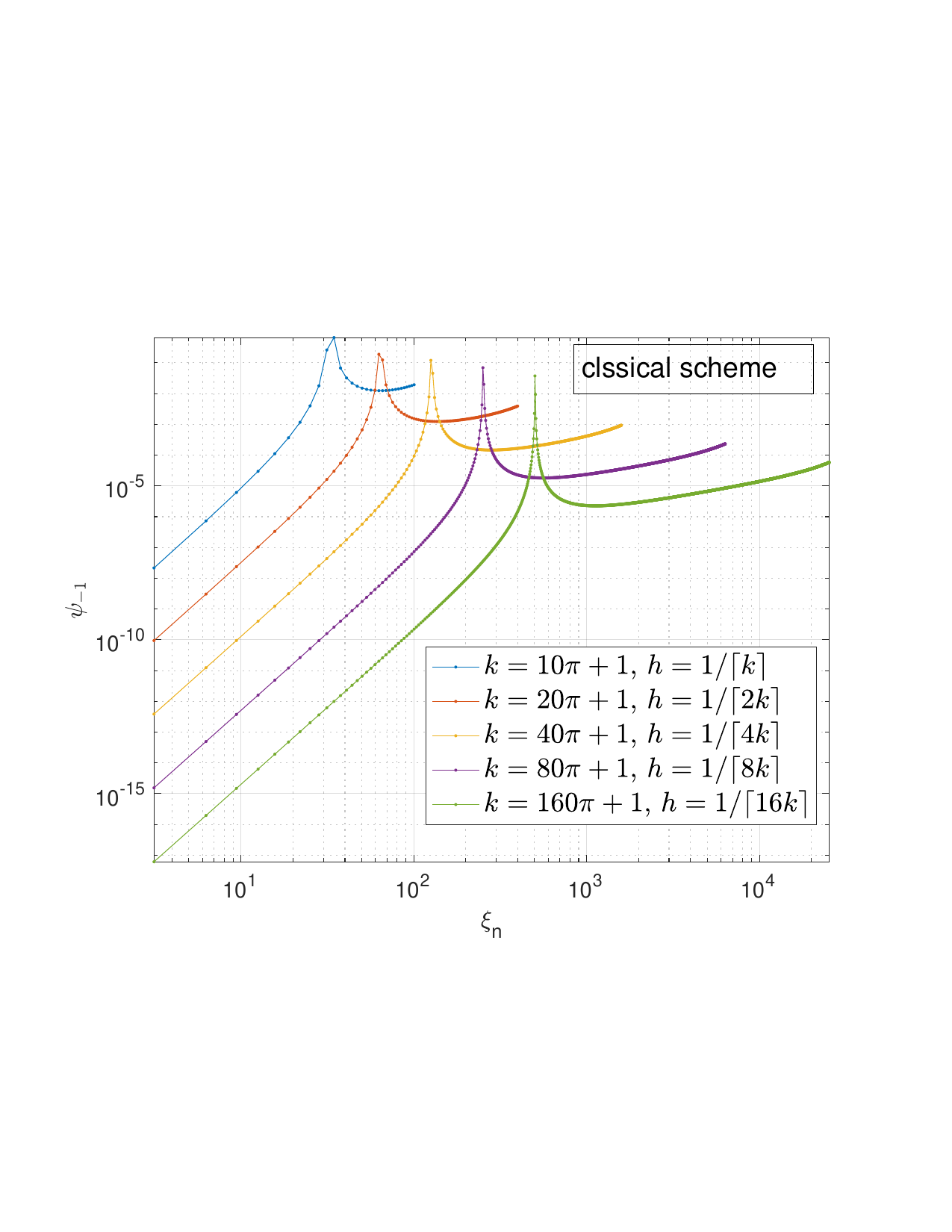}
  \includegraphics[height=12em,width=15em,trim=45 180 65 180,clip]{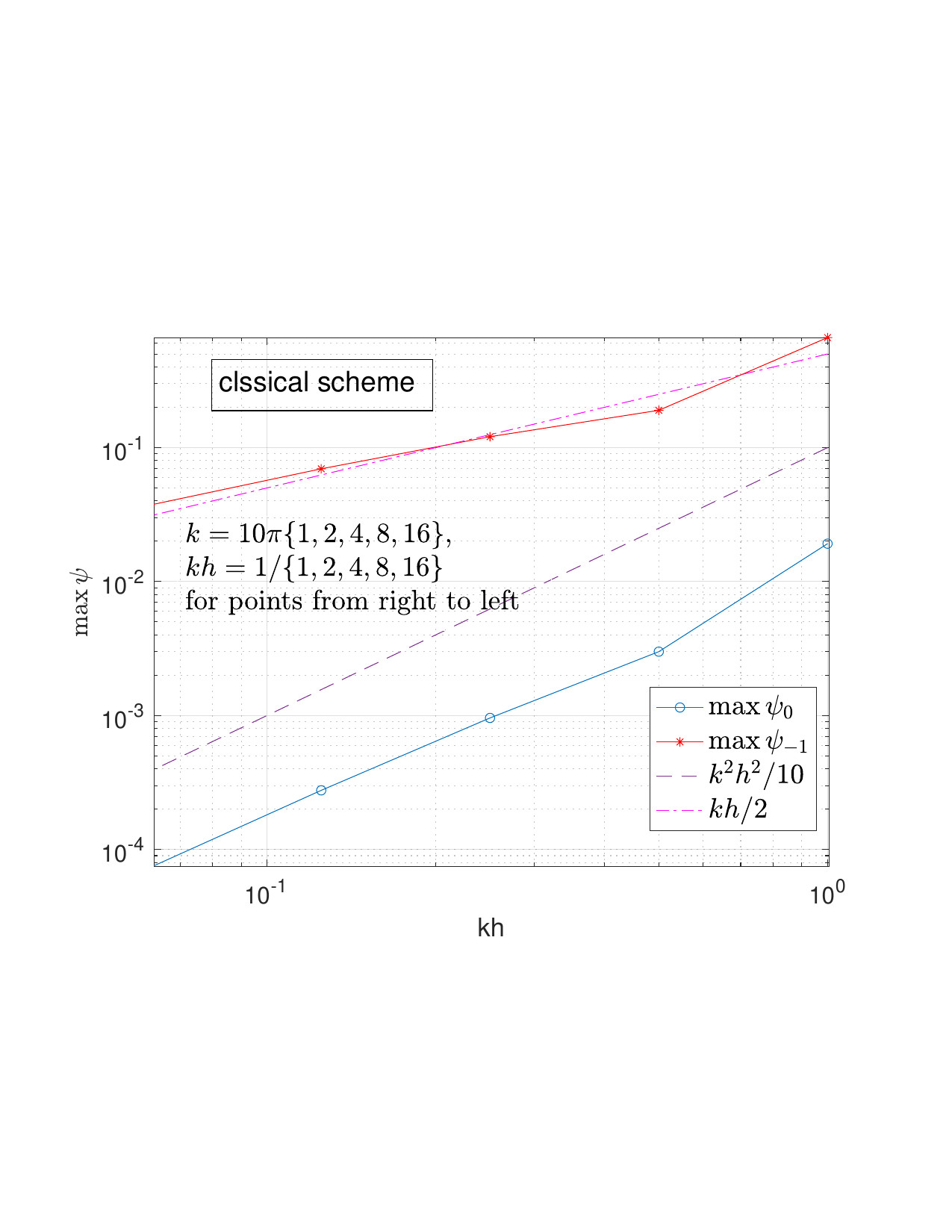}
  \includegraphics[height=12em,width=15em,trim=40 180 70 180,clip]{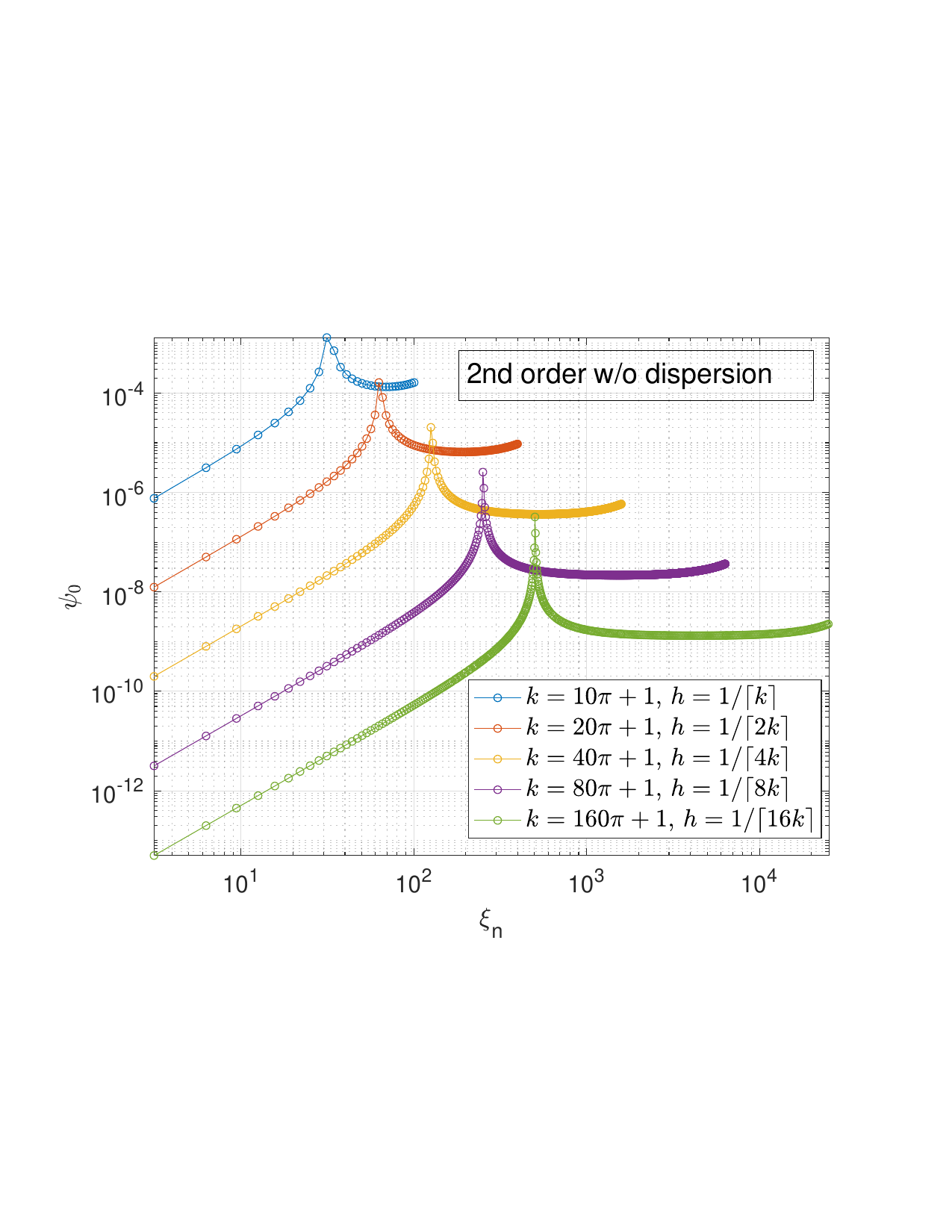}
  \includegraphics[height=12em,width=15em,trim=40 180 70 180,clip]{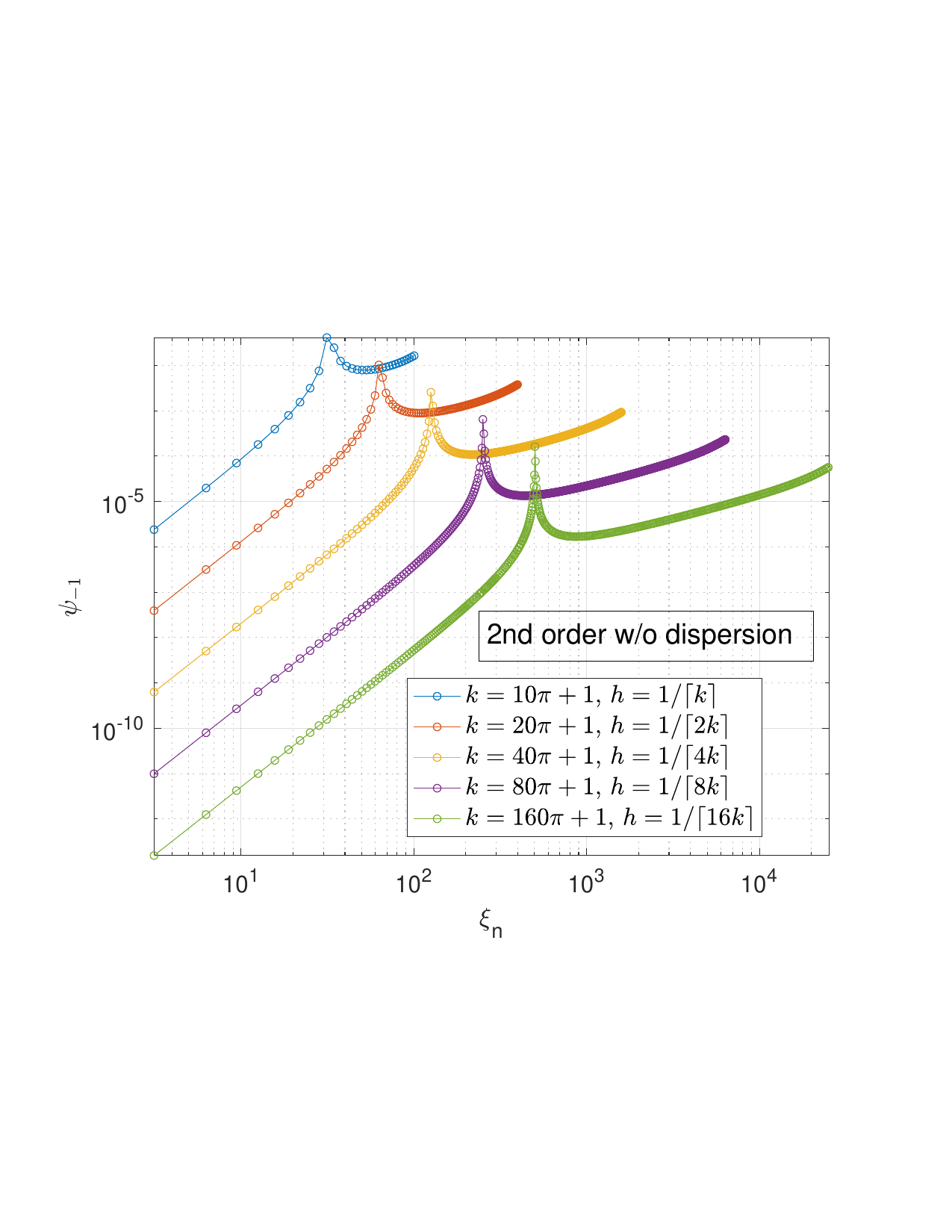}
  \includegraphics[height=12em,width=15em,trim=40 180 65 180,clip]{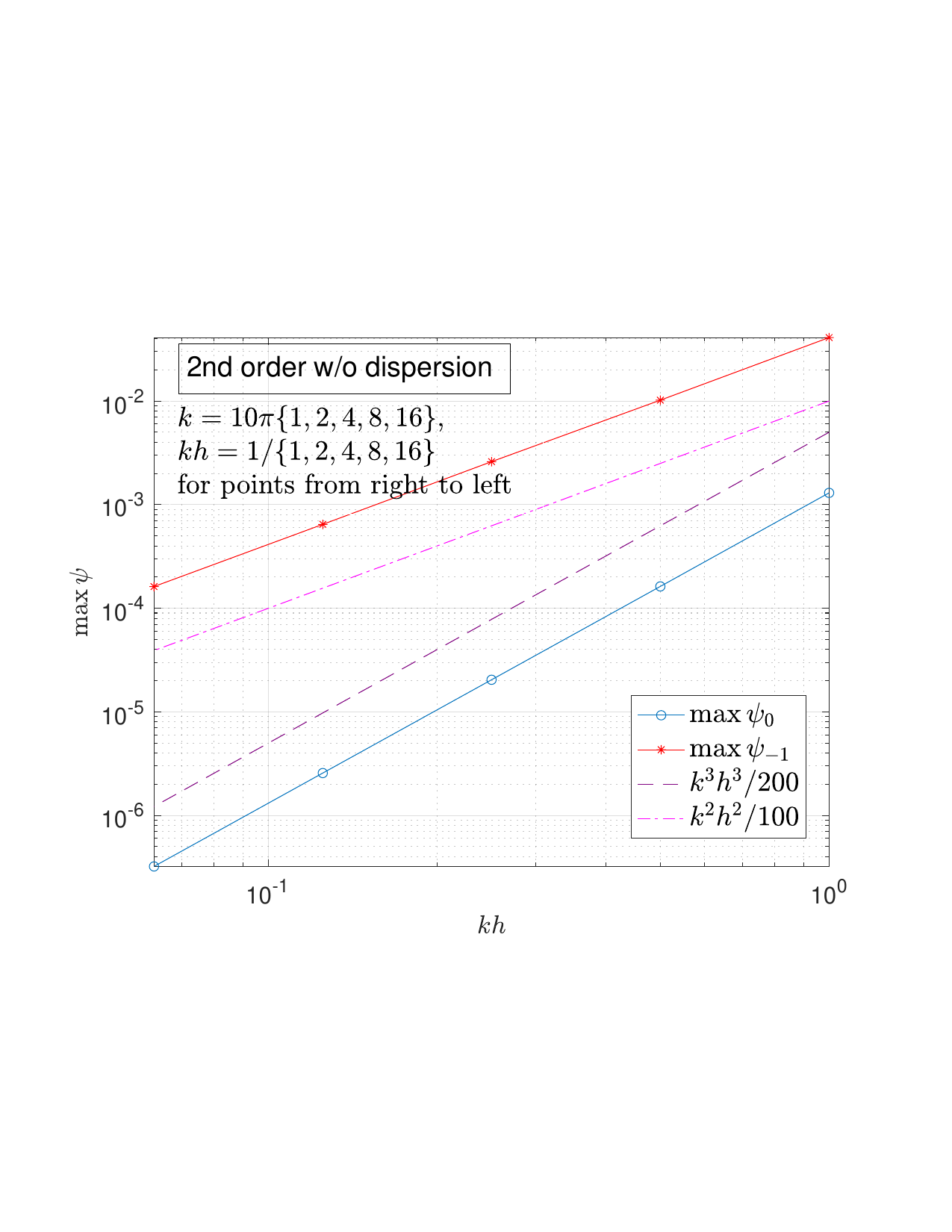}
  \includegraphics[height=12em,width=15em,trim=35 180 70 180,clip]{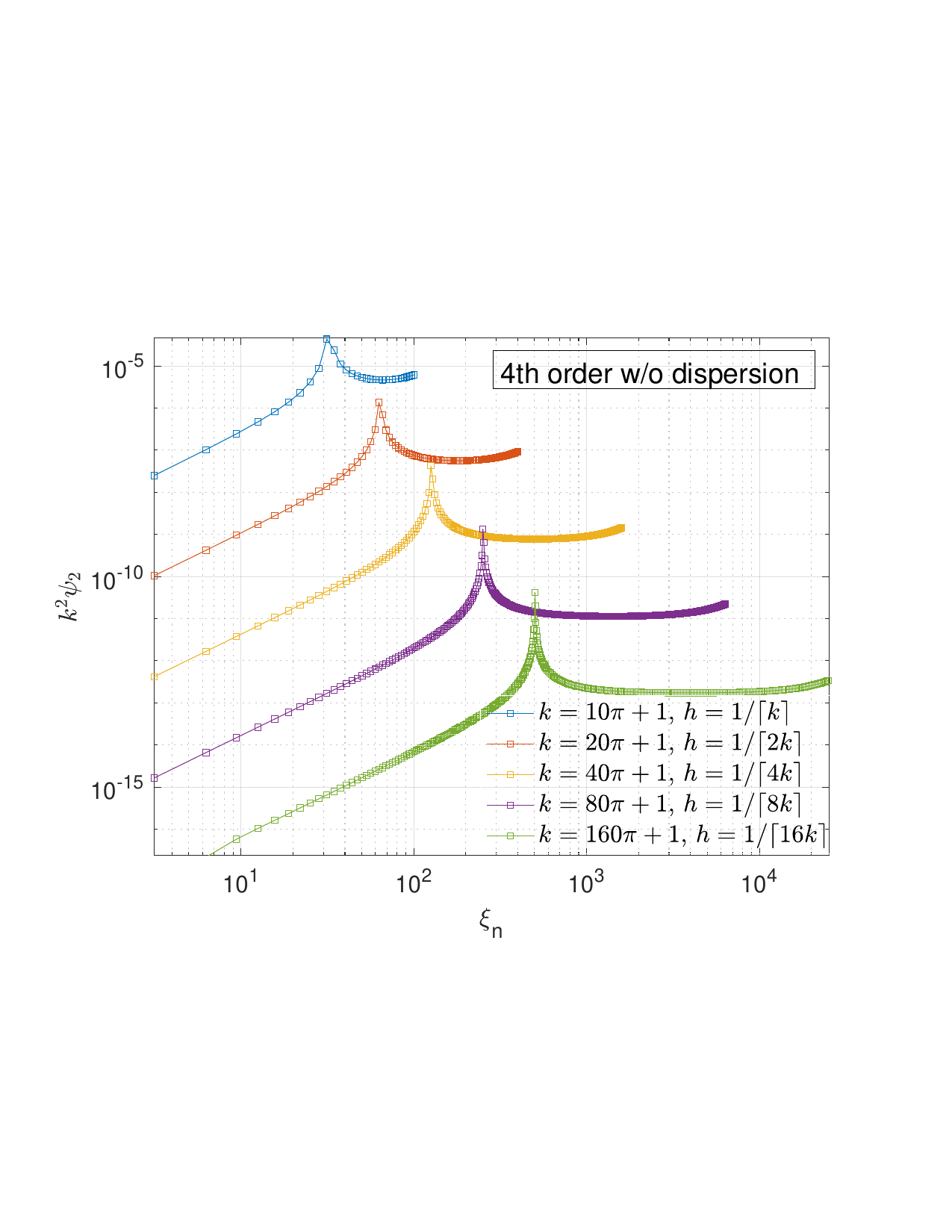}
  \includegraphics[height=12em,width=15em,trim=35 180 70 180,clip]{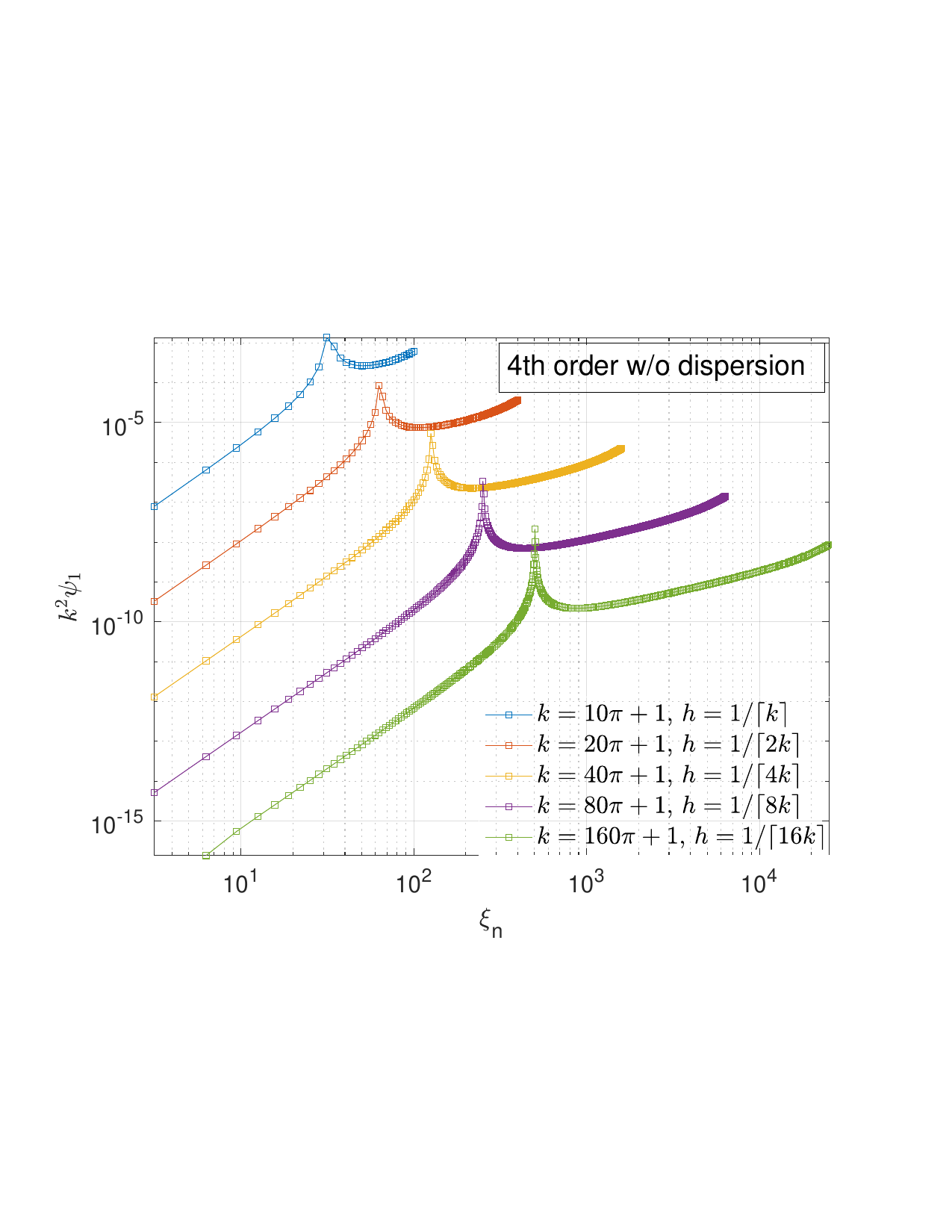}
  \includegraphics[height=12em,width=15em,trim=35 180 65 180,clip]{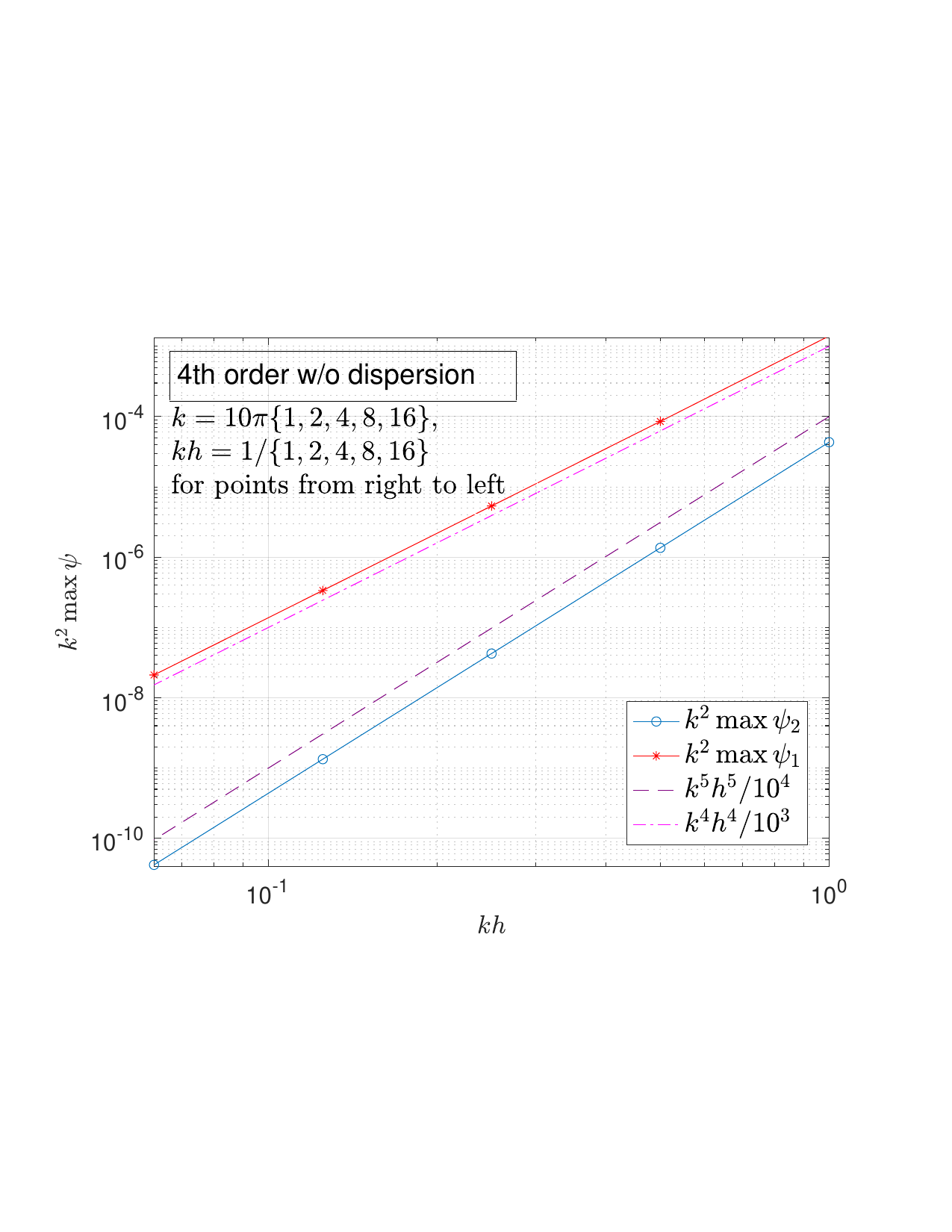}
  \includegraphics[height=12em,width=15em,trim=35 180 70 180,clip]{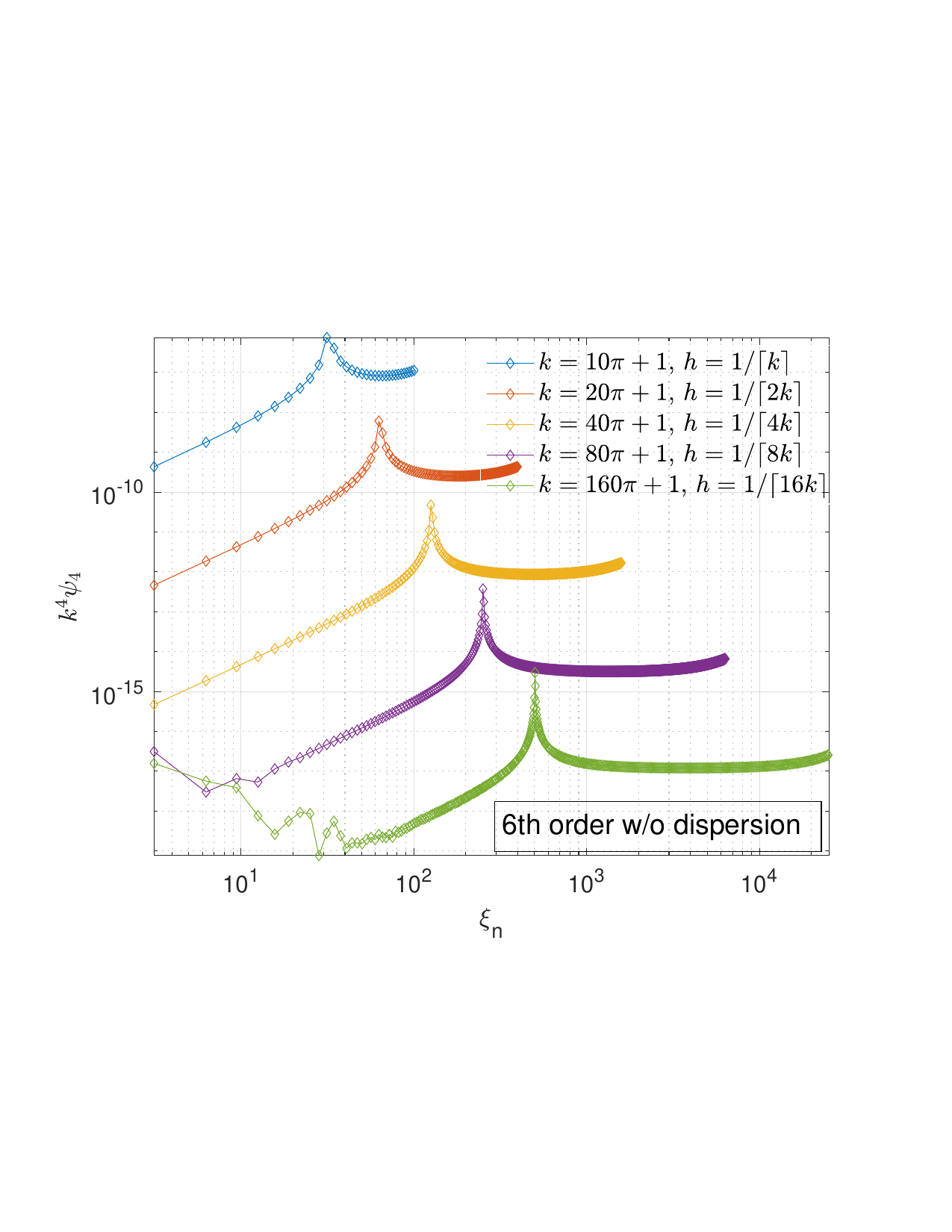}
  \includegraphics[height=12em,width=15em,trim=35 180 70 180,clip]{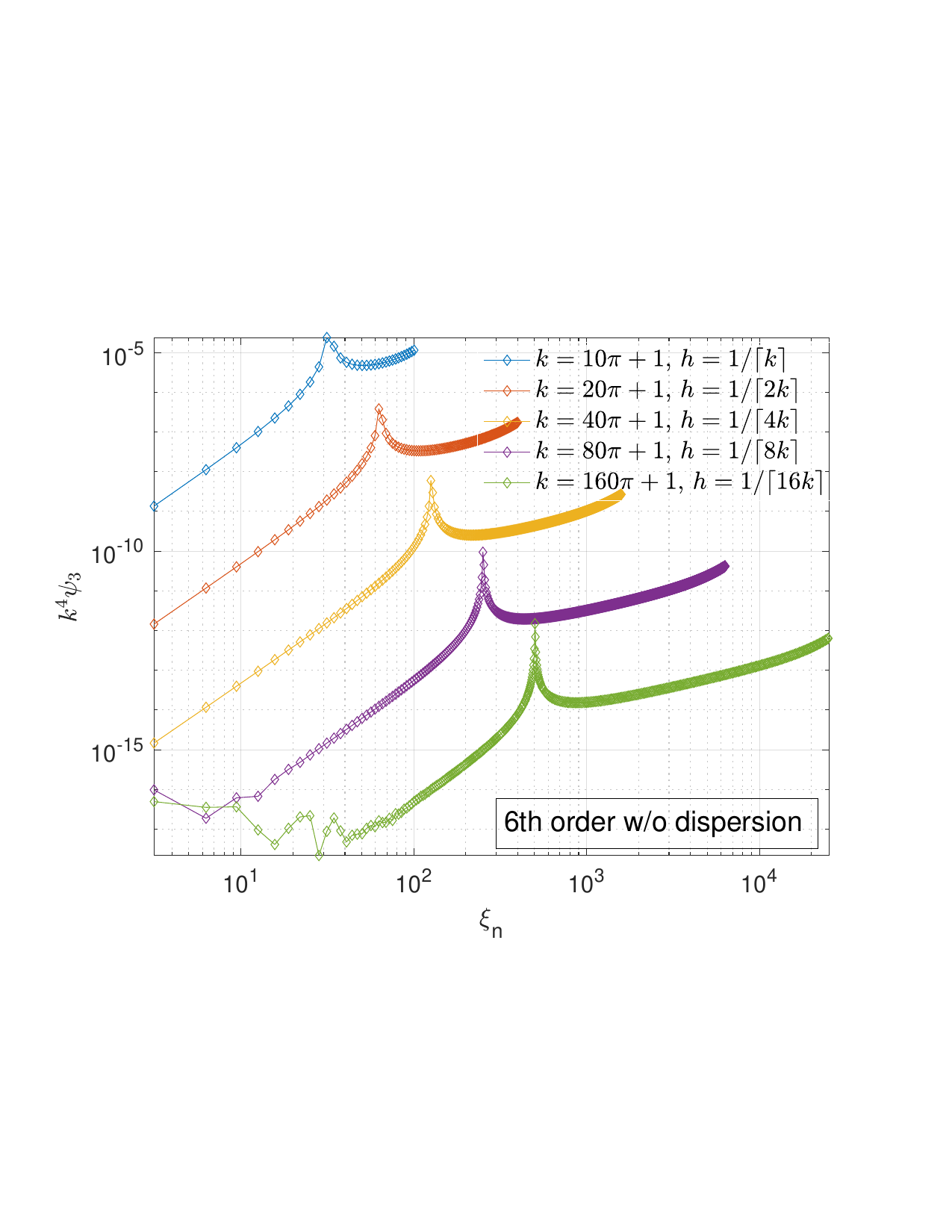}
  \includegraphics[height=12em,width=15em,trim=35 180 65 180,clip]{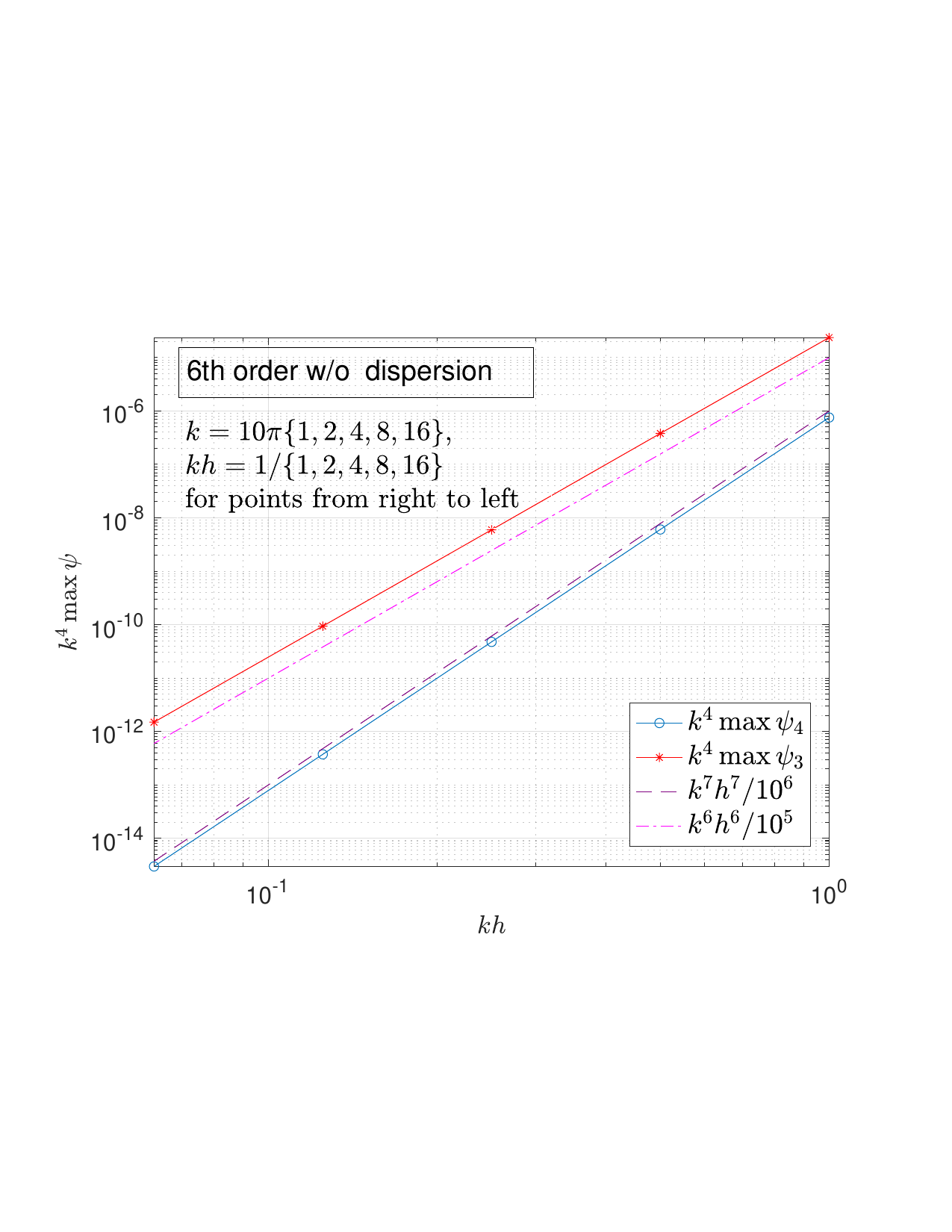}
  \caption{Symbol errors in 1D: $k^p\psi_p$ for $L^2$-norm (col.1) and $k^p\psi_{p-1}$ for
    $H^1$-semi-norm (col.2).}\label{1dfig}
\end{figure}

% k=10pi+1, 20pi+1, 40pi+1, 80pi+1, 160pi+1, kh=1./[1,2,4,8,16]
% classical, L^2:  0.0192, 0.0030, 0.00096, 0.0002766, 7.4789e-5
%            H^1:  0.6635, 0.19, 0.1207, 0.0695, 0.0376
% 2nd order, L^2:  0.0013, 1.6251e-04,  2.0415e-05, 2.5678e-06, 3.2228e-07
%            H^1:  0.0410, 0.0102, 0.0026, 6.4537e-04, 1.6200e-04
% 4th order, L^2:  4.3237e-05, 1.3574e-06, 4.2556e-08, 1.3376e-09, 4.1963e-11
%            H^1:  0.0014, 8.5288e-05, 5.3477e-06, 3.3617e-07, 2.1093e-08
% 6th order, L^2:  7.5450e-07, 6.0480e-09, 4.7470e-11, 3.7304e-13, 2.9712e-15
%            H^1:  2.3703e-05, 3.8001e-07, 5.9653e-09, 9.3755e-11, 1.4935e-12

\section{Numerical experiments in 1D}

We solve \eqref{helm1d} numerically by the FDMs described in Sect.~\ref{fdm1d}.  Two different type
of sources are used. With $k=n_k\pi+1$ and a monochromatic source $f(x)=\sin(n_k\pi x)$, the exact
solution is $u(x)=f(x)/(k^2-n_k^2\pi^2)$. With a mixed source
$f(x)=\sum_{j=0}^5\sin(2^j\times 5\pi x)$, the exact solution is
$ u(x) = \sum_{j=0}^5\frac{1}{k^2-2^{2j}\times 25\pi^2}\sin(2^j\times 5\pi x)$. A range of $k$ and
$h$ values are tested. Denote the error $e^h:=u-u^h$. The error (semi-)norms $\|e^h\|_{L^2(0,1)}$
and $|e^h|_{H^1(0,1)}$ are computed in the Fourier frequency domain by Parseval's identity. The
results are shown in Fig.~\ref{1dnum}.

In the first row of Fig.~\ref{1dnum}, the error of classical scheme is presented. For the source
$f(x)=\sin(k-1)x$ of a single frequency near the wavenumber $k=n_k\pi+1$, the $L^2$-norm error in
the first column is seen of order $k^2h^2$ with $\|e^h\|_{L^2}/(k^2h^2)$ almost independent of $k$,
which corroborates Theorem~\ref{1derr}. It is only when $kh>0.2$ that larger $k$ leads to a smaller
$\|e^h\|_{L^2}$ at the same $kh$ value. This preasymptotic behavior can be understood from the
symbol error
$ \psi_0(k-1)=\hat{\mathcal{H}}^{-1}-(\hat{\mathcal{H}}_{cls,n_k})^{-1}=
\frac{1}{k^2-(k-1)^2}-\frac{1}{k^2-\frac{4}{h^2}\sin^2\frac{(k-1)h}{2}} $ which is about 0.0106384,
0.00832166, 0.00558938, 0.00329617, 0.00180466 for $k=5\pi+1, 10\pi+1, 20\pi+1, 40\pi+1, 80\pi+1$ at
$h=1/\lceil k\rceil$. The $H^1$-semi-norm error for the monochromatic source is shown in the second
column. Along each line for a fixed $k$, the subfigure shows that $|e^h|_{H^1}$ decays at the rate
of order $h^2$. But at the same $kh$ value, it shows that $|e^h|_{H^1}$ doubles when $k$ doubles,
thus confirms the finding in Sect.~\ref{acc} that $|e^h|_{H^1}$ is of order $k^3h^2$. The last two
columns of the first row for the mixed and fixed source are similar to the first two columns for the
monochromatic source, showing that the error is still concentrated at nearly resonant (i.e. near
$k$) Fourier frequencies.

In the second and subsequent rows of Fig.~\ref{1dnum}, the dispersion free schemes
\cite{wang2014pollution} described in Sect.~\ref{fdm1d} are studied. It is seen that, for both
monochromatic and mixed sources, the $L^2$-norm error is of the expected order $m$ in $h$ and order
$m-1$ in $k$, while the $H^1$-semi-norm error is almost a constant times $k^mh^m$, all agreeing with
Sect.~\ref{acc}. Only the last column needs some explanation. From $k=5\pi+1$ to $k=10\pi+1$ (and to
$k=20\pi+1$, etc.), $|e^h|_{H^1}$ experiences a drop at the same $kh$ value. For sufficiently small
$h$, the mixed source is actually well resolved. To understand the error drops as $k$ increases, we
go back to Sect.~\ref{acc} and take a closer look at row 3-5 and column 3 of Fig.~\ref{1dfig}. It
can be seen that the evanescent modes (with Fourier frequency $\xi_n$ greater than the wavenumber
$k$) have a larger error than the propagating modes (with $\xi_n<k$). Now let us plot the symbol
errors when $kh$ is fixed but $k$ increases; see Fig.~\ref{1dkhfixed}. We see that small $k$ has a
larger evanescent error. Those evanescent errors which dominate $|e^h|_{H^1}$ simply decreases while
$k$ increases and $kh$ is fixed.  In Fig.~\ref{1dnum}, it is also seen that, for small values of
$kh$, the round-off errors become apparent in both the 4th and 6th order schemes, indicating a
smallest $h$ usable in practice.

\begin{figure}
  \centering%
  \includegraphics[scale=.3,trim=30 180 75 180,clip]{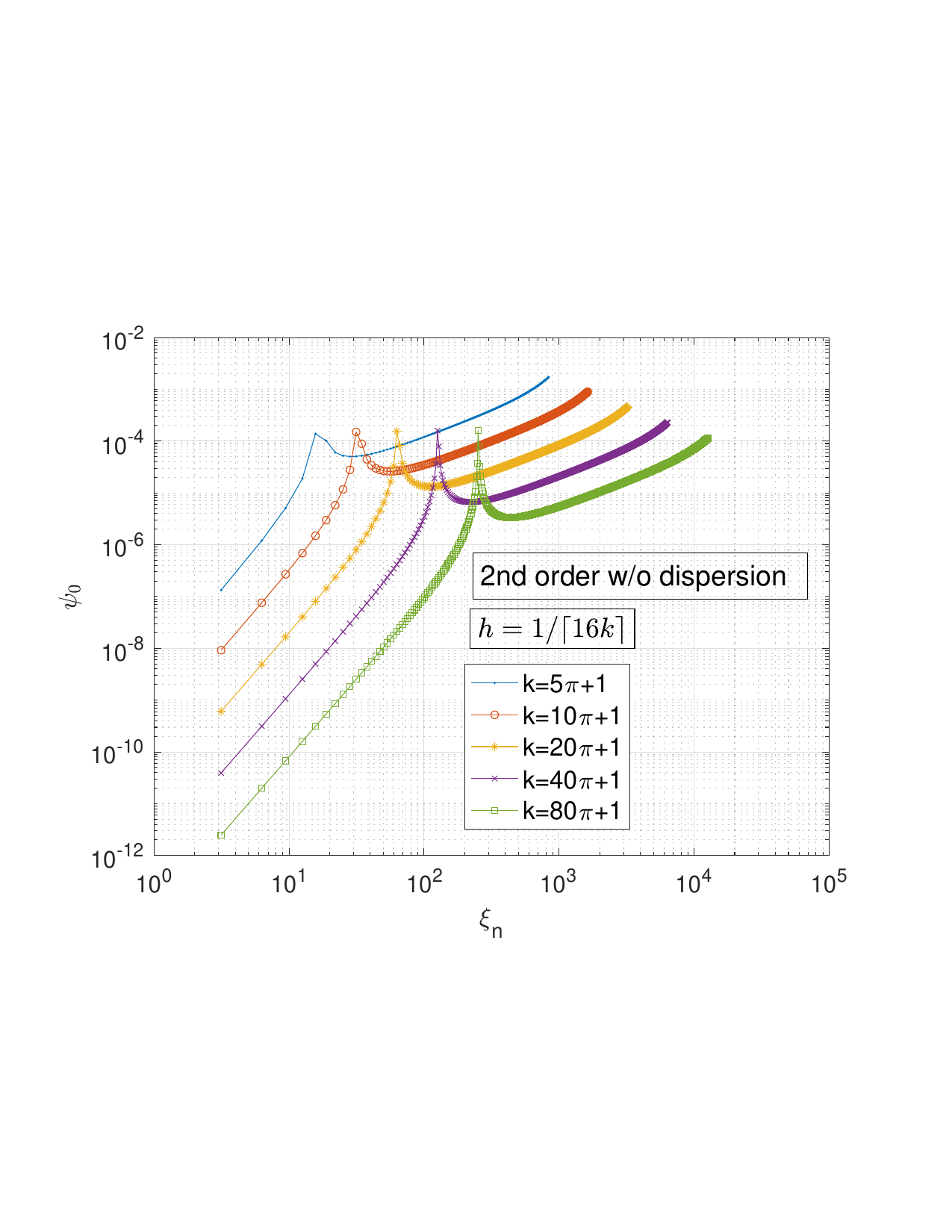}
  \includegraphics[scale=.3,trim=30 180 75 180,clip]{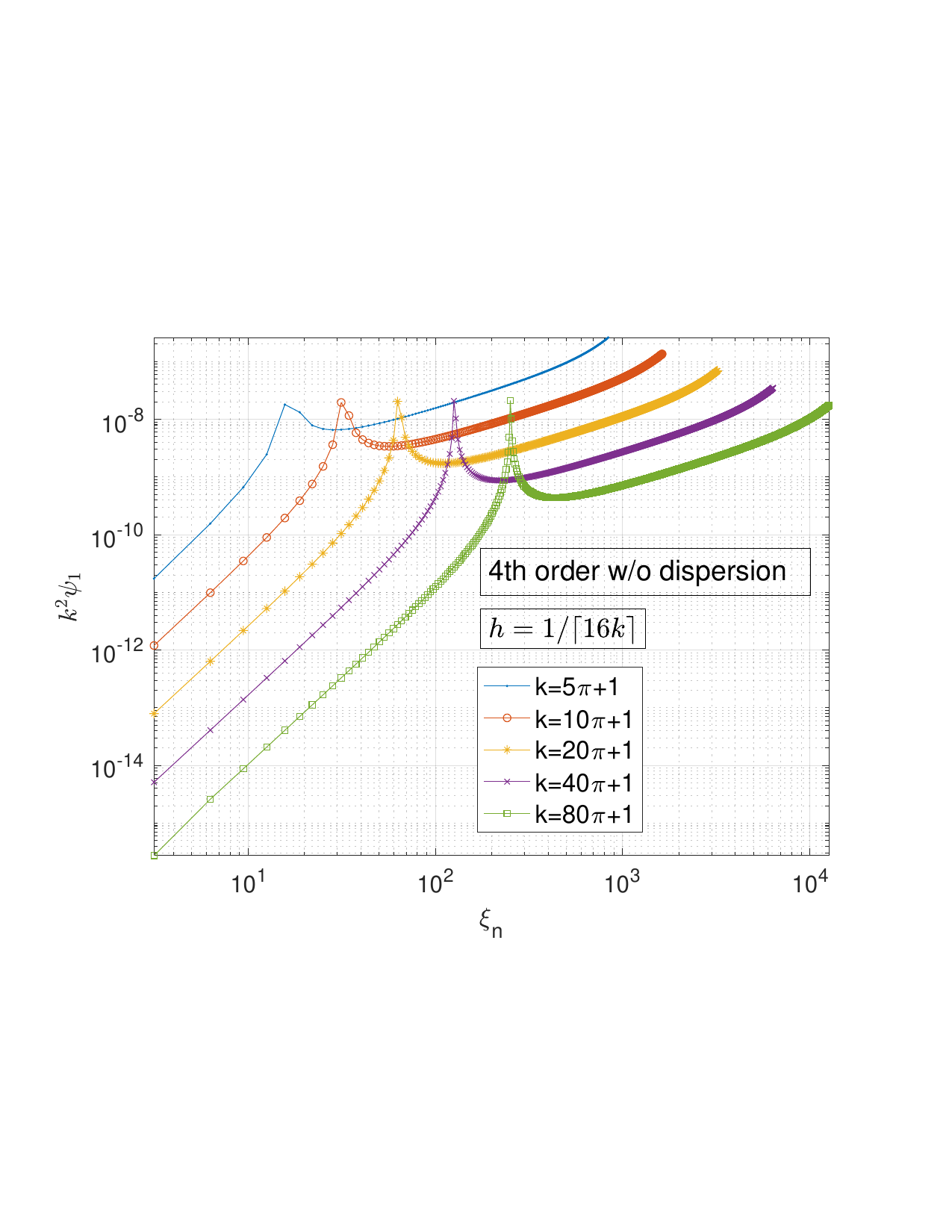}
  \includegraphics[scale=.3,trim=30 180 75 180,clip]{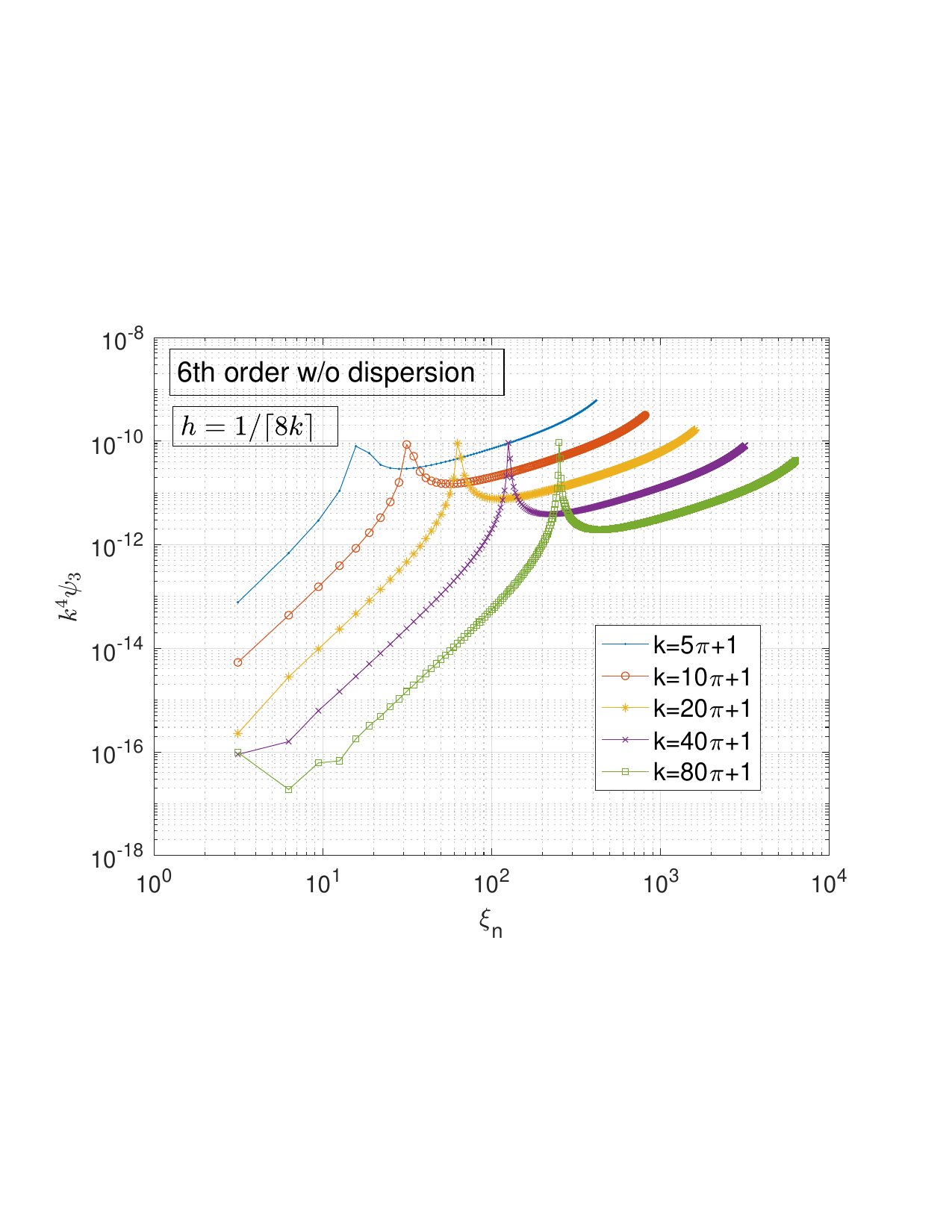}
  \caption{Symbol errors in 1D with $kh$ fixed and $k$ increases.}\label{1dkhfixed}
\end{figure}

\begin{figure}
  \centering %
  \includegraphics[height=14em, width=12em,trim=30 180 75 180,clip]{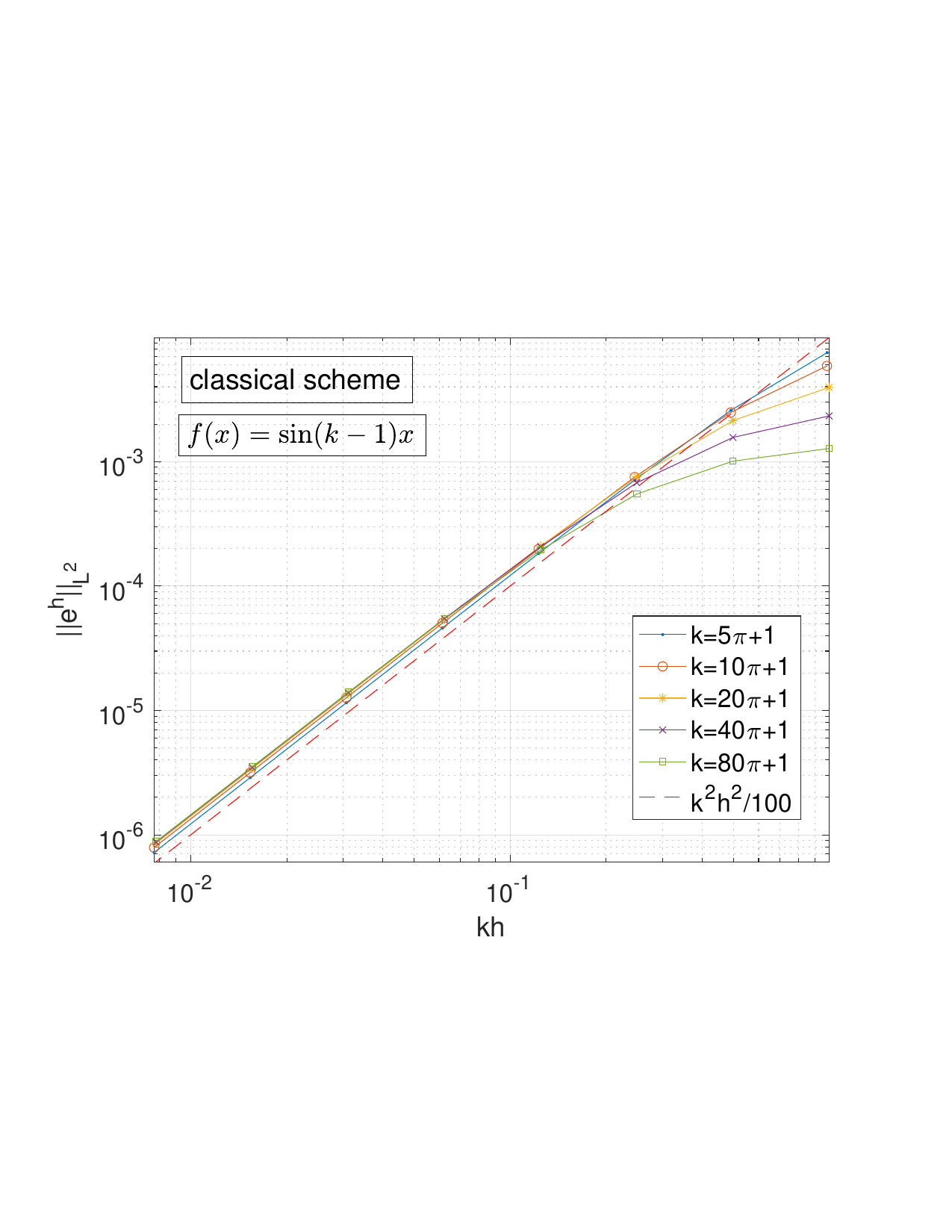}\;%
  \includegraphics[height=14em, width=12em,trim=30 180 75 180,clip]{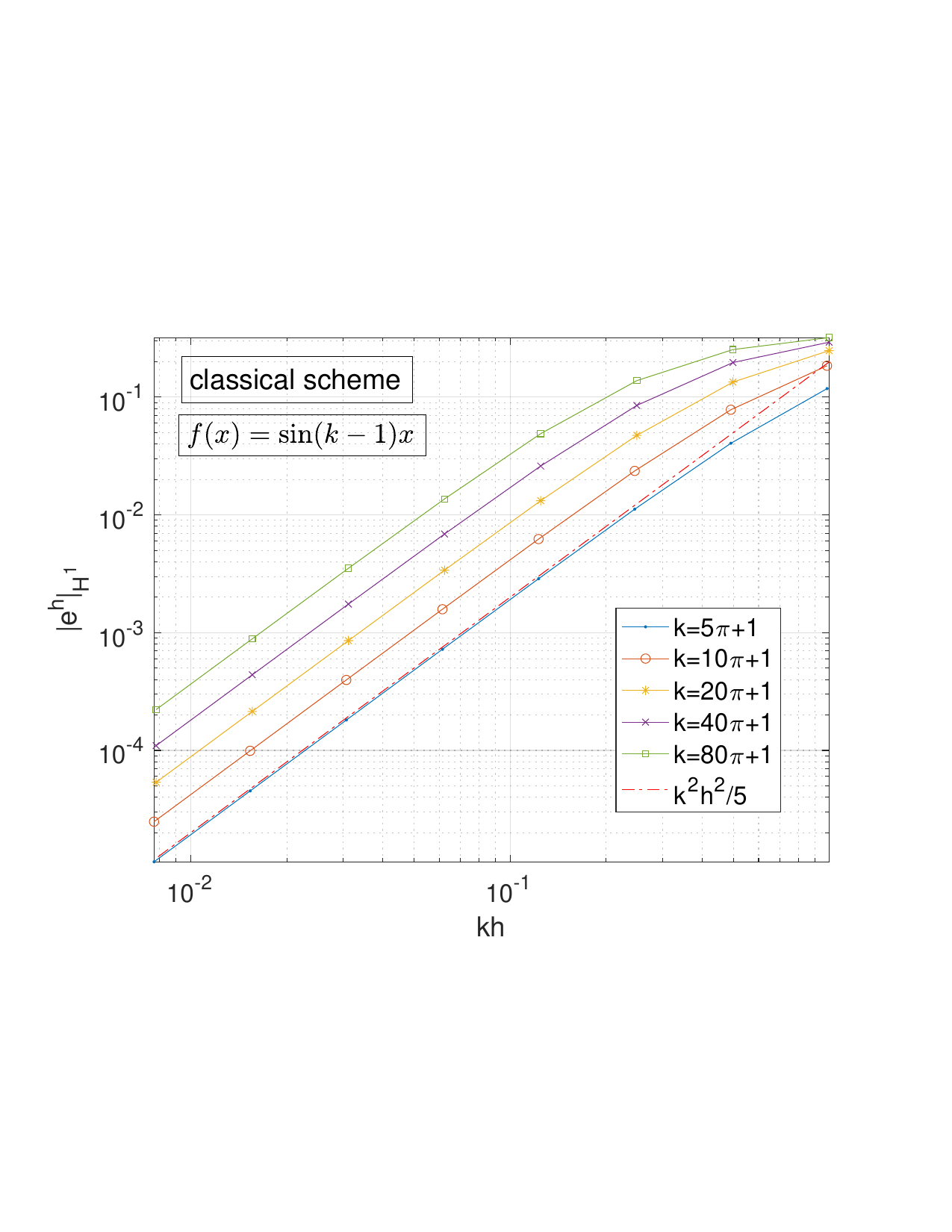}\;%
  \includegraphics[height=14em, width=12em,trim=30 180 75 180,clip]{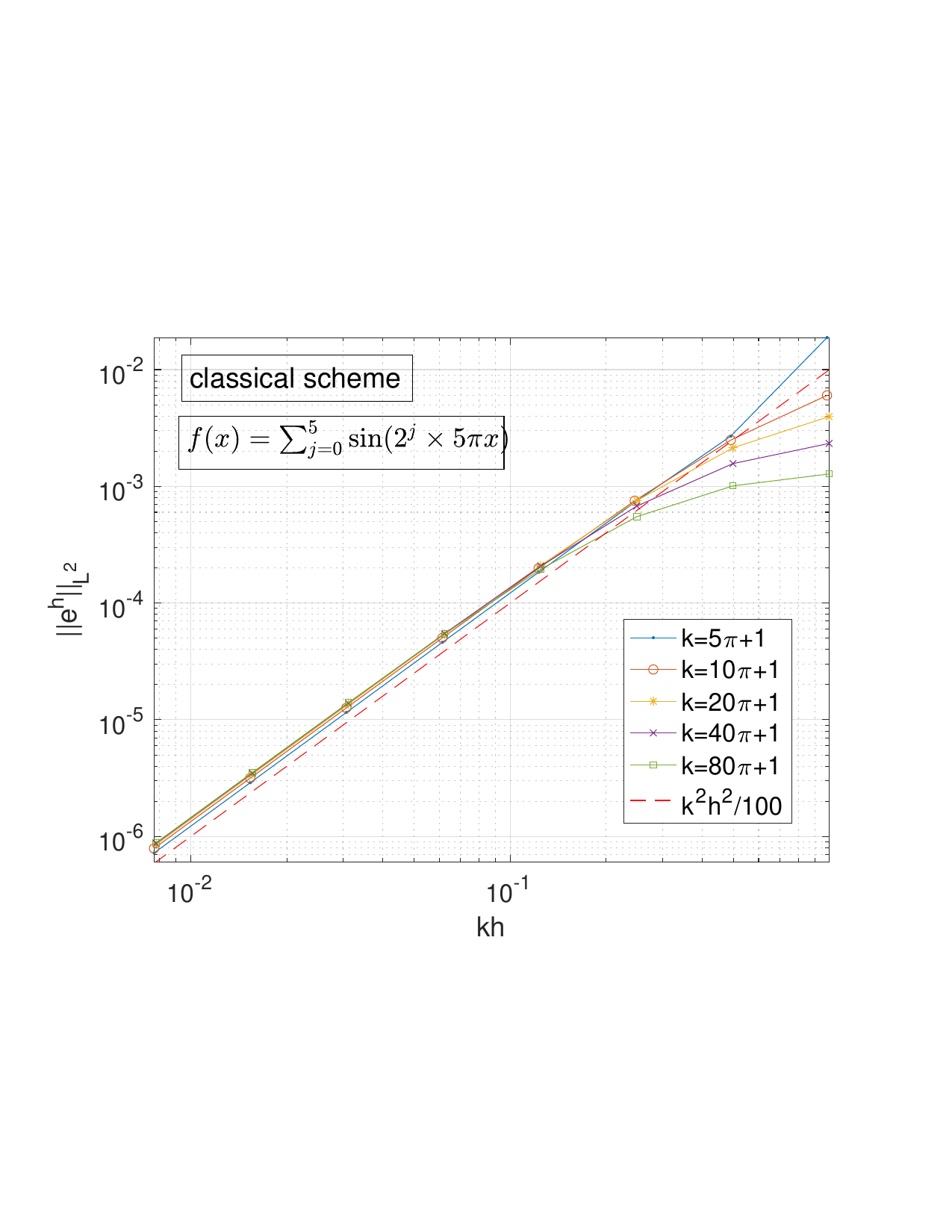}\;%
  \includegraphics[height=14em, width=12em,trim=30 180 75 180,clip]{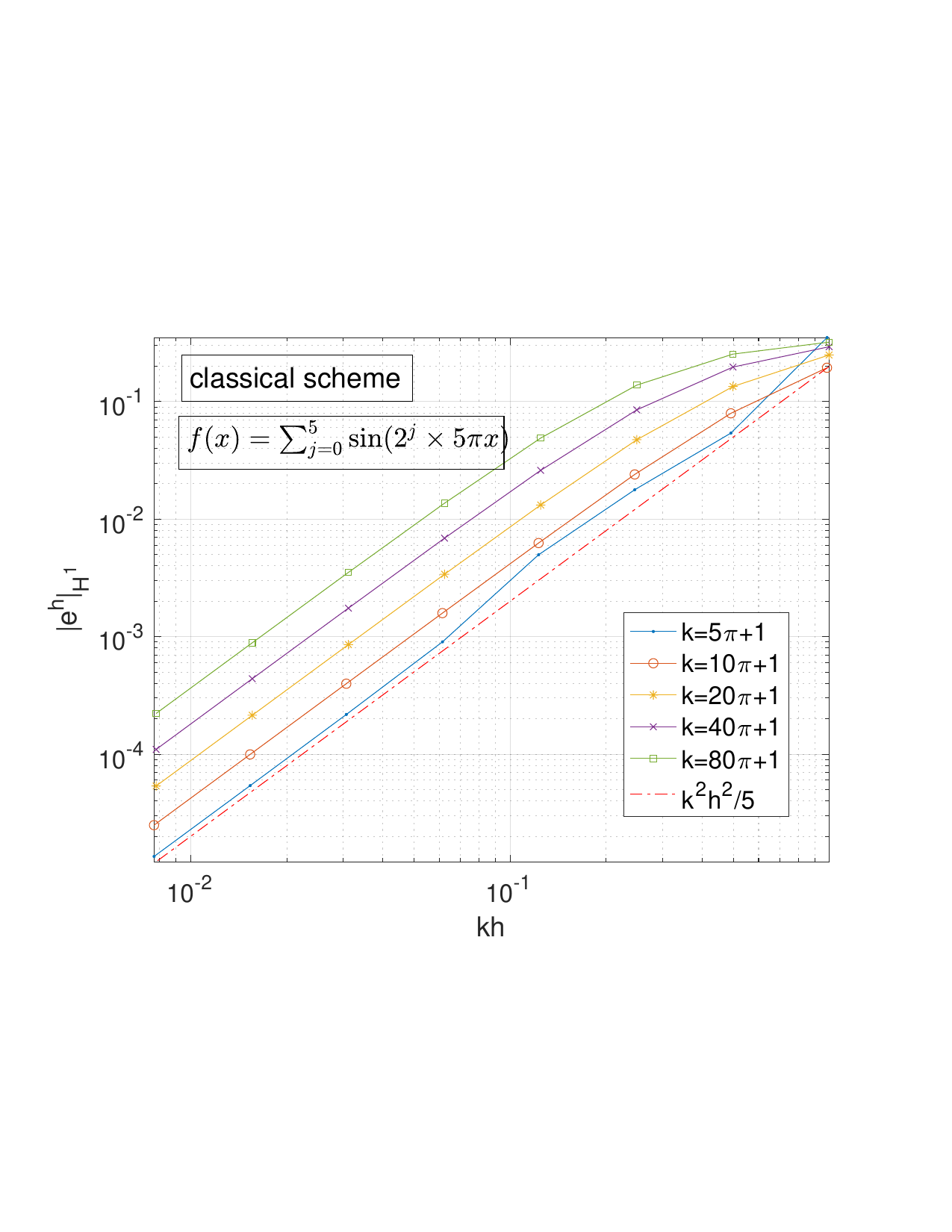}\\%
  \includegraphics[height=14em, width=12em,trim=30 180 75 180,clip]{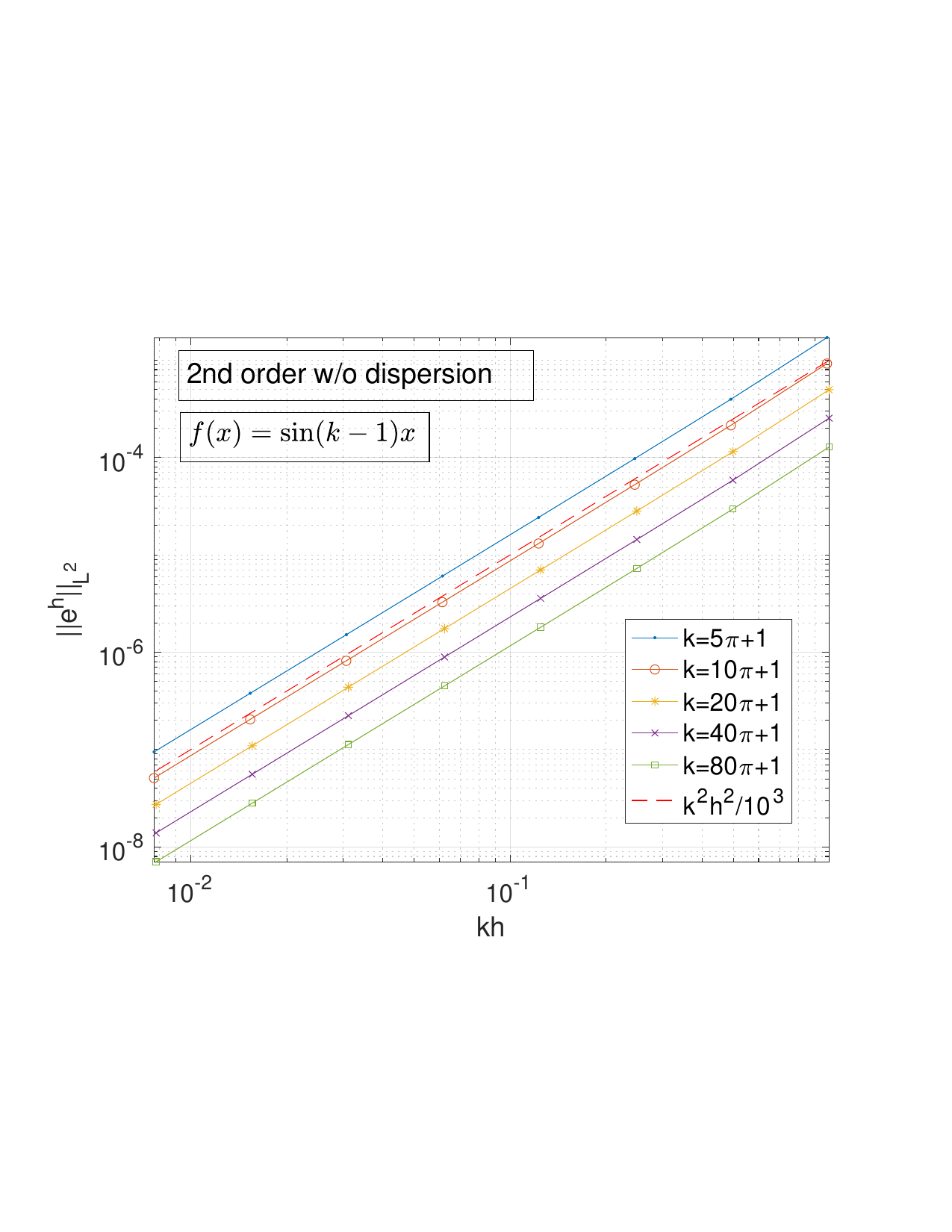}\;%
  \includegraphics[height=14em, width=12em,trim=30 180 75 180,clip]{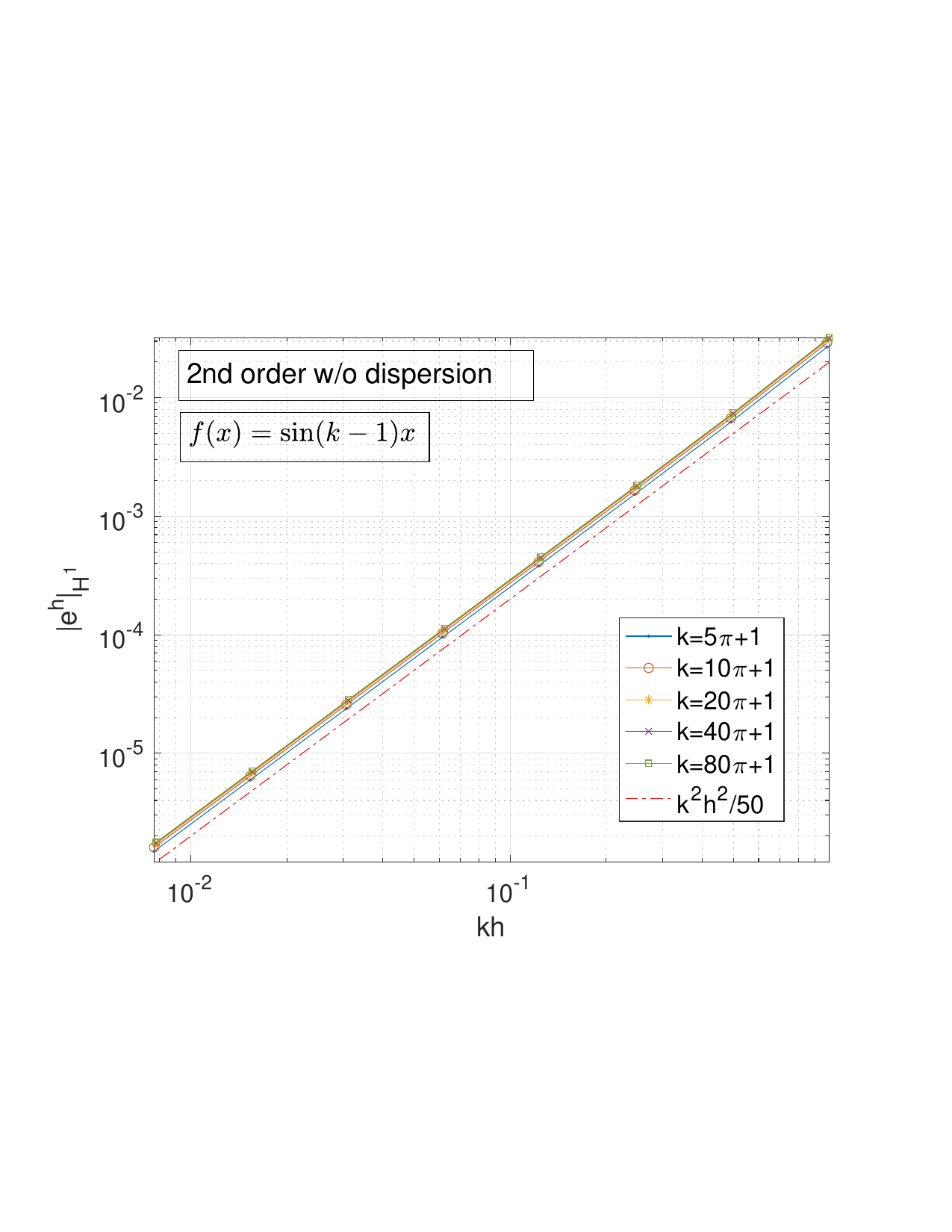}\;%
  \includegraphics[height=14em, width=12em,trim=30 180 75 180,clip]{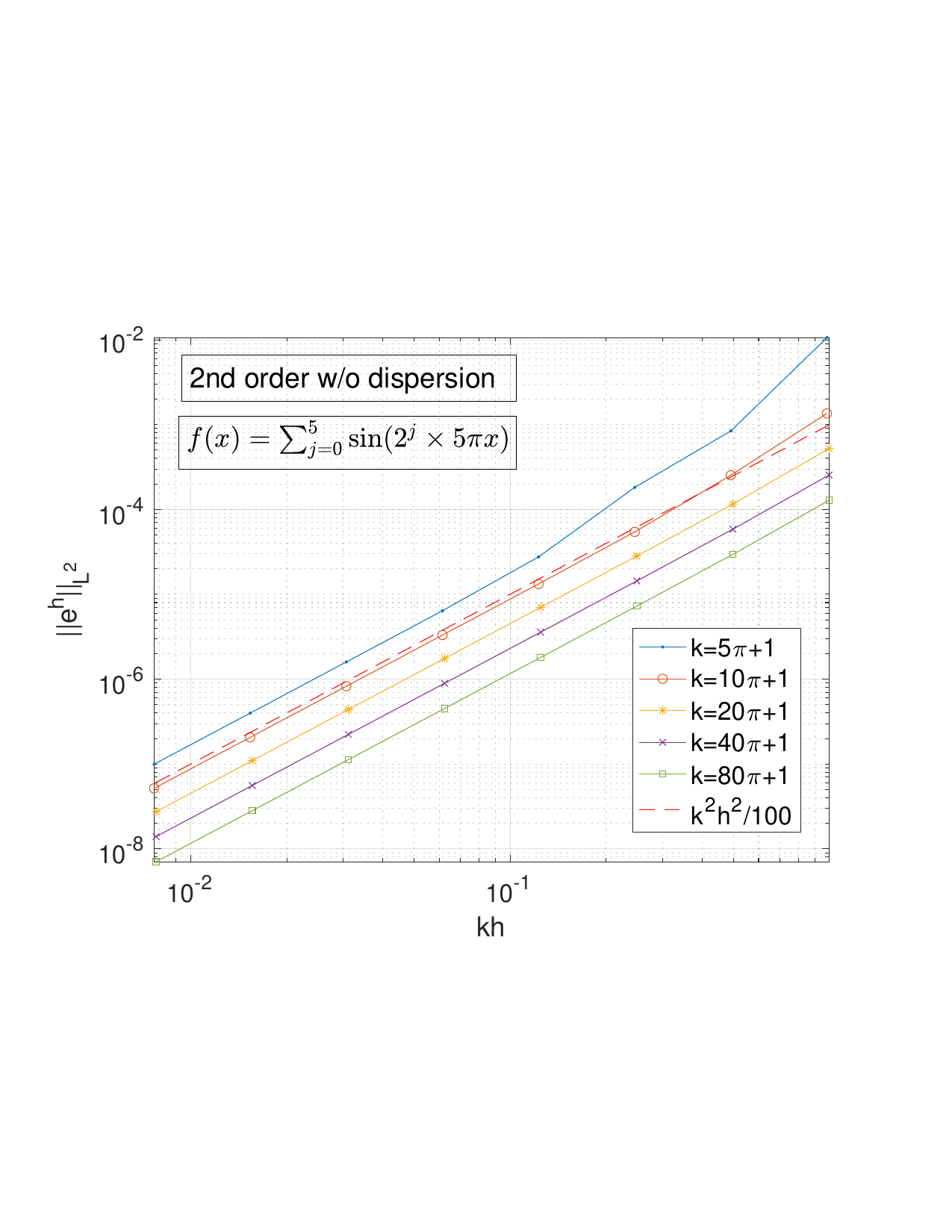}\;%
  \includegraphics[height=14em, width=12em,trim=30 180 75 180,clip]{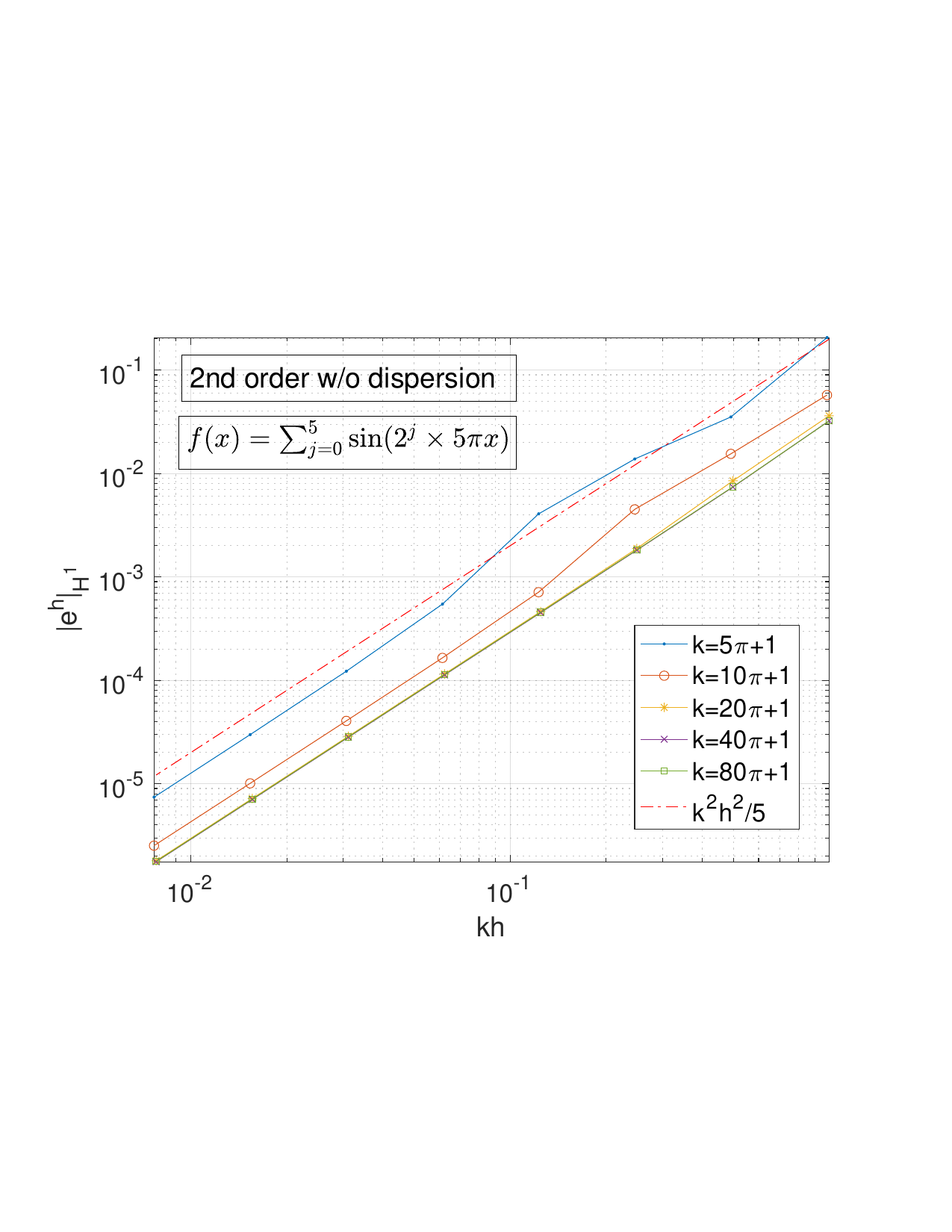}\\%
  \includegraphics[height=14em, width=12em,trim=30 180 75 180,clip]{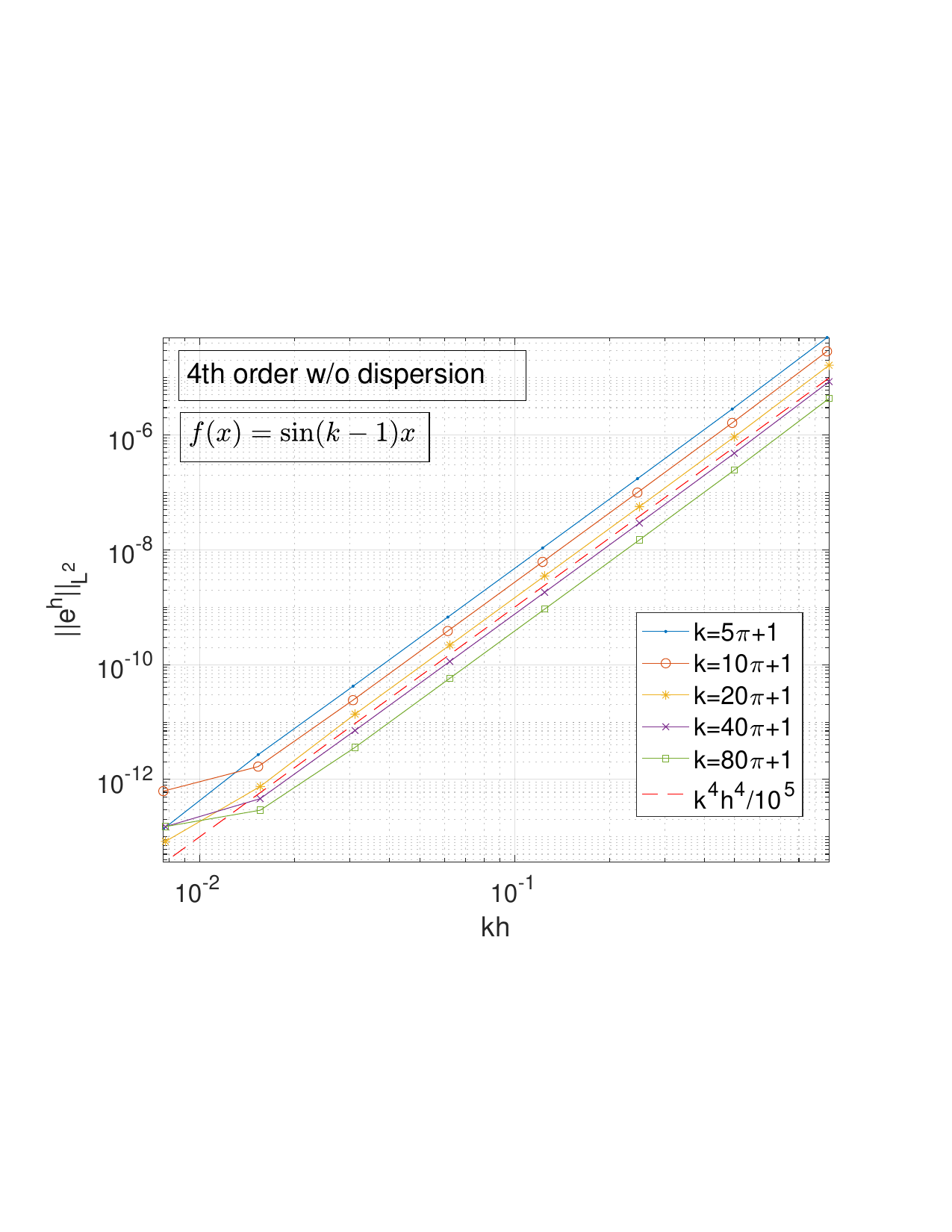}\;%
  \includegraphics[height=14em, width=12em,trim=30 180 75 180,clip]{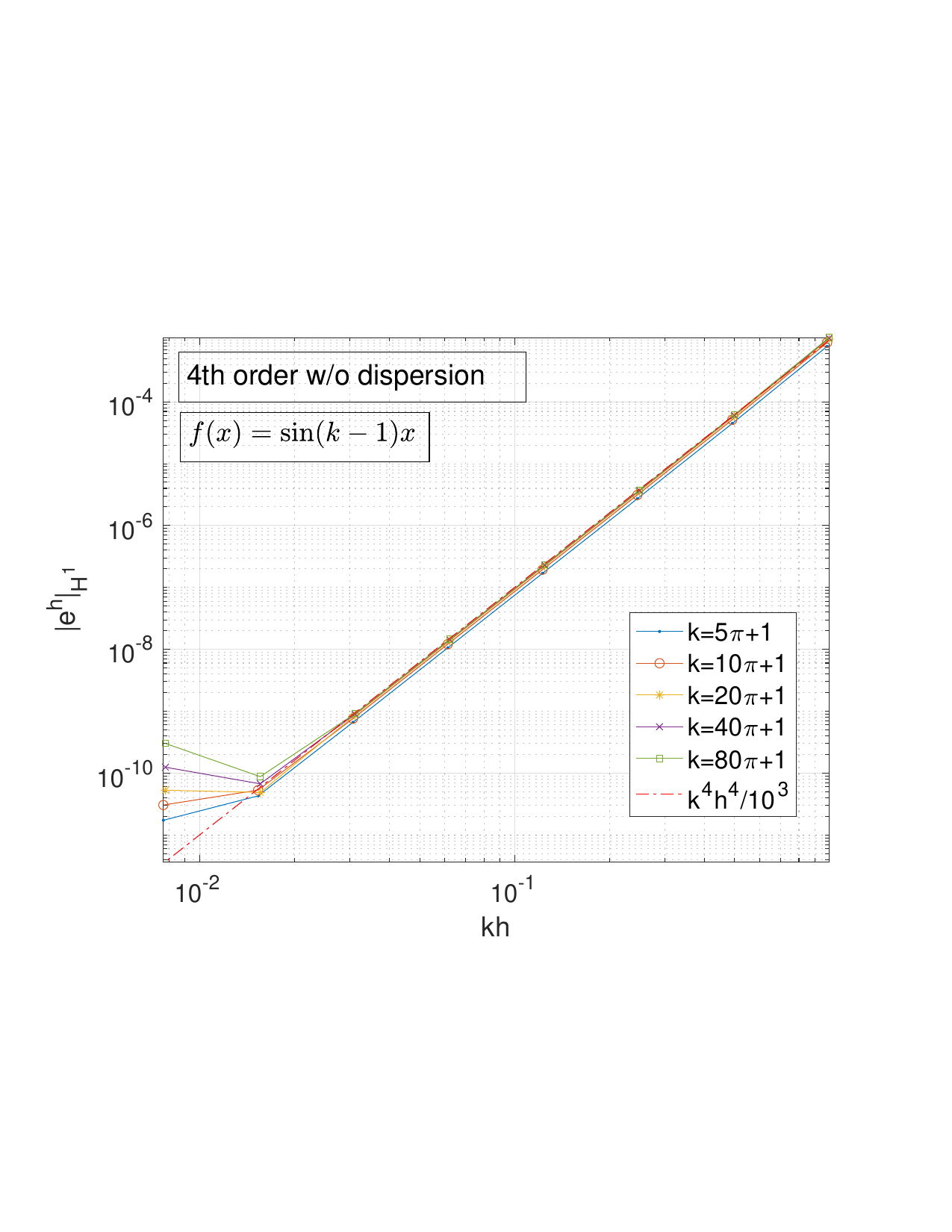}\;%
  \includegraphics[height=14em, width=12em,trim=30 180 75 180,clip]{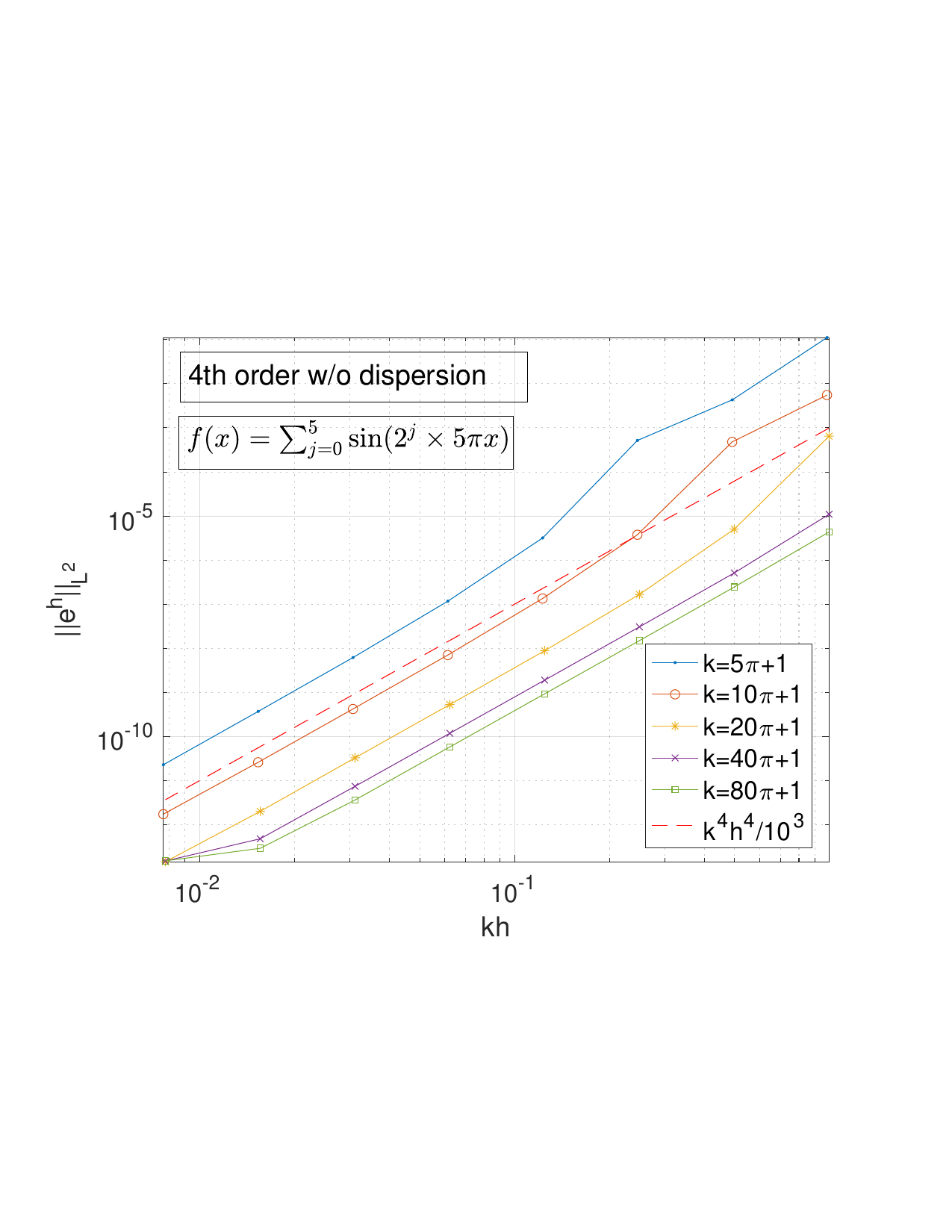}\;%
  \includegraphics[height=14em, width=12em,trim=30 180 75 180,clip]{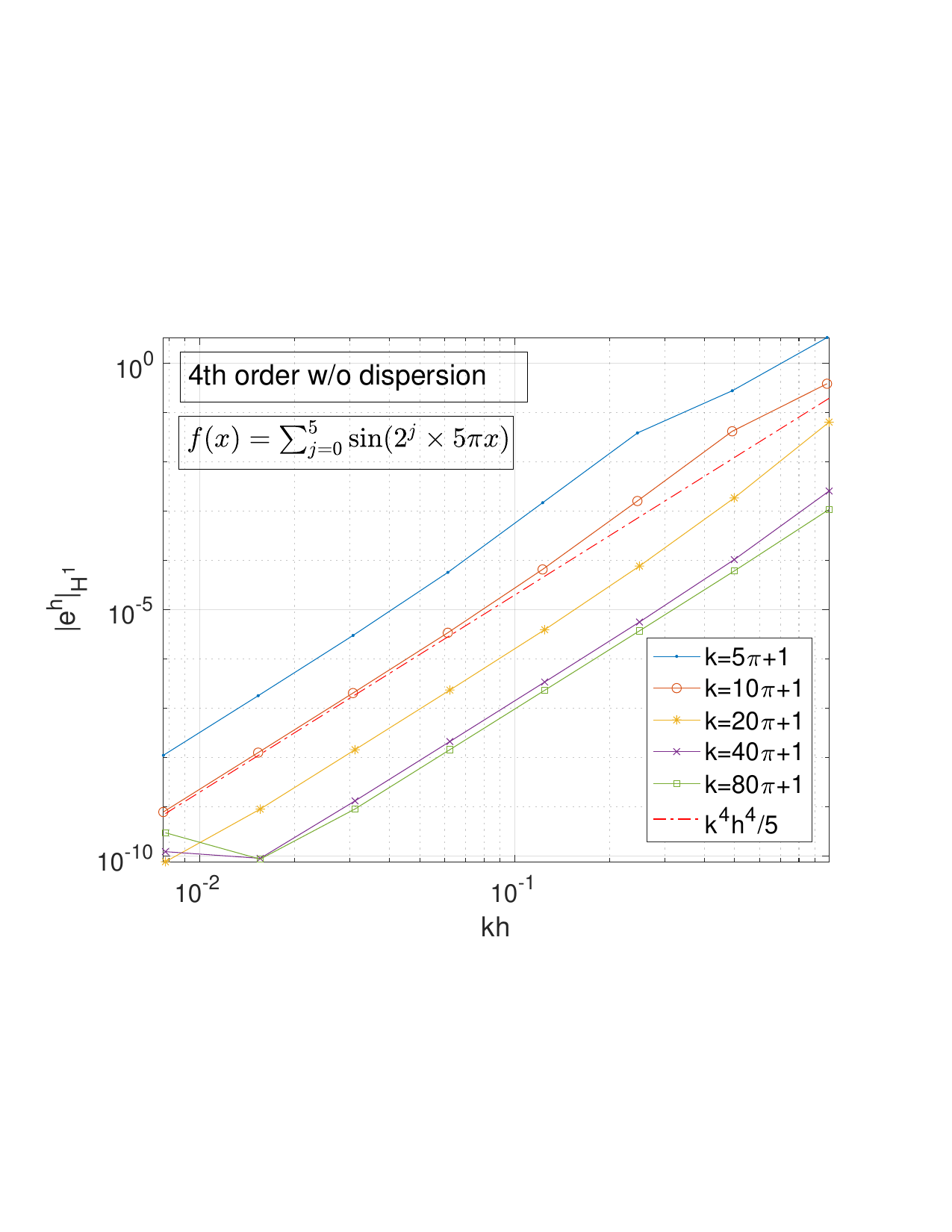}\\%
  \includegraphics[height=14em, width=12em,trim=30 180 75 180,clip]{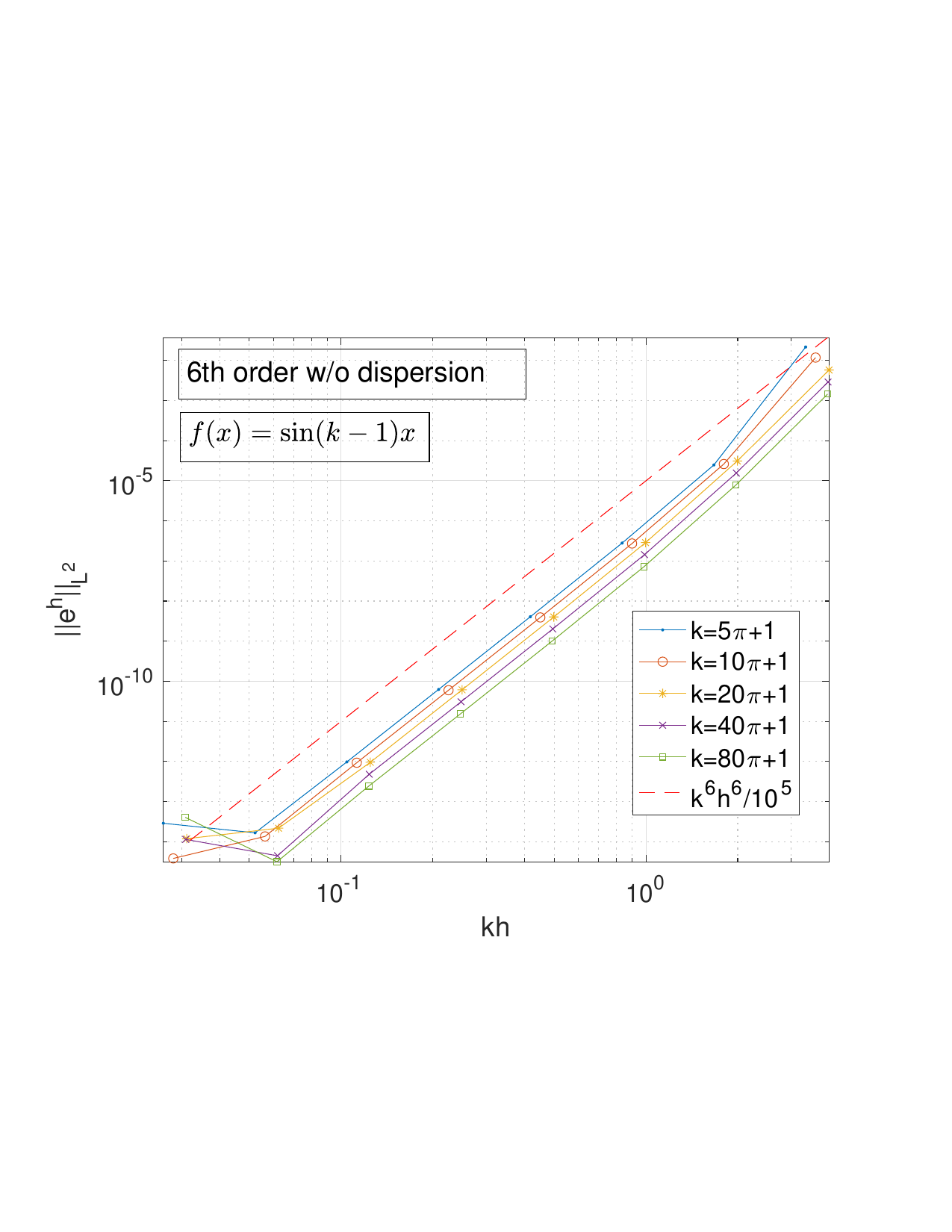}\;%
  \includegraphics[height=14em, width=12em,trim=30 180 75 180,clip]{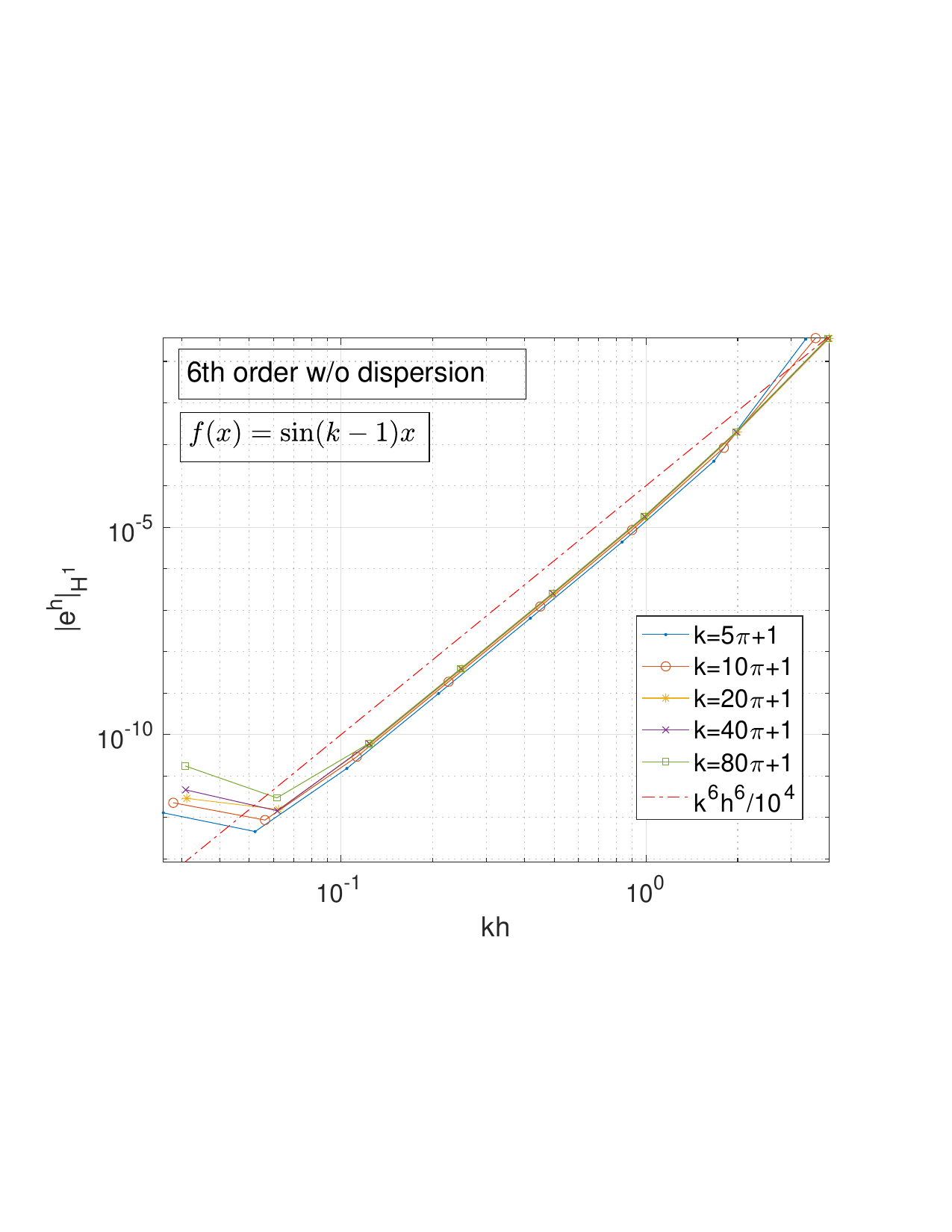}\;%
  \includegraphics[height=14em, width=12em,trim=30 180 75 180,clip]{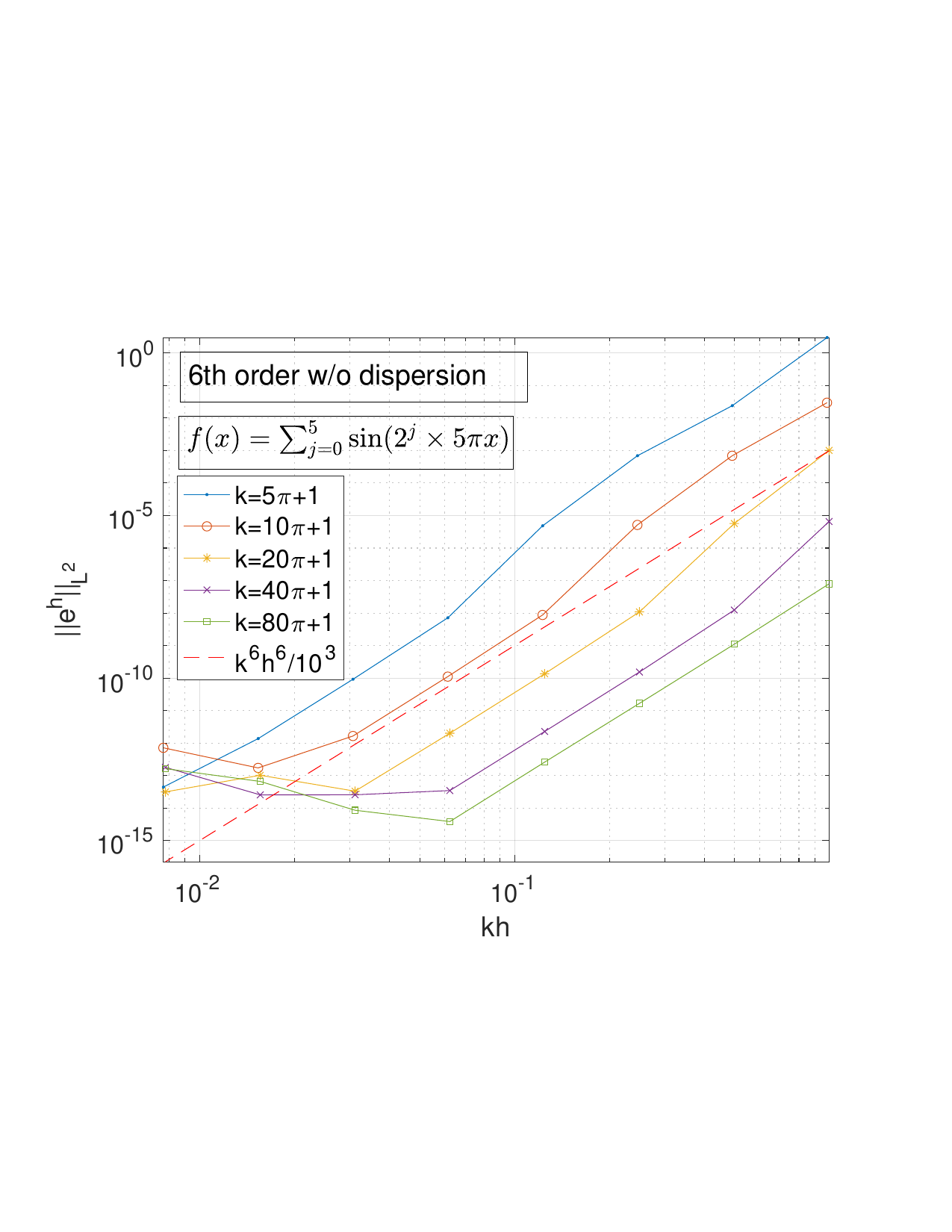}\;%
  \includegraphics[height=14em, width=12em,trim=30 180 75 180,clip]{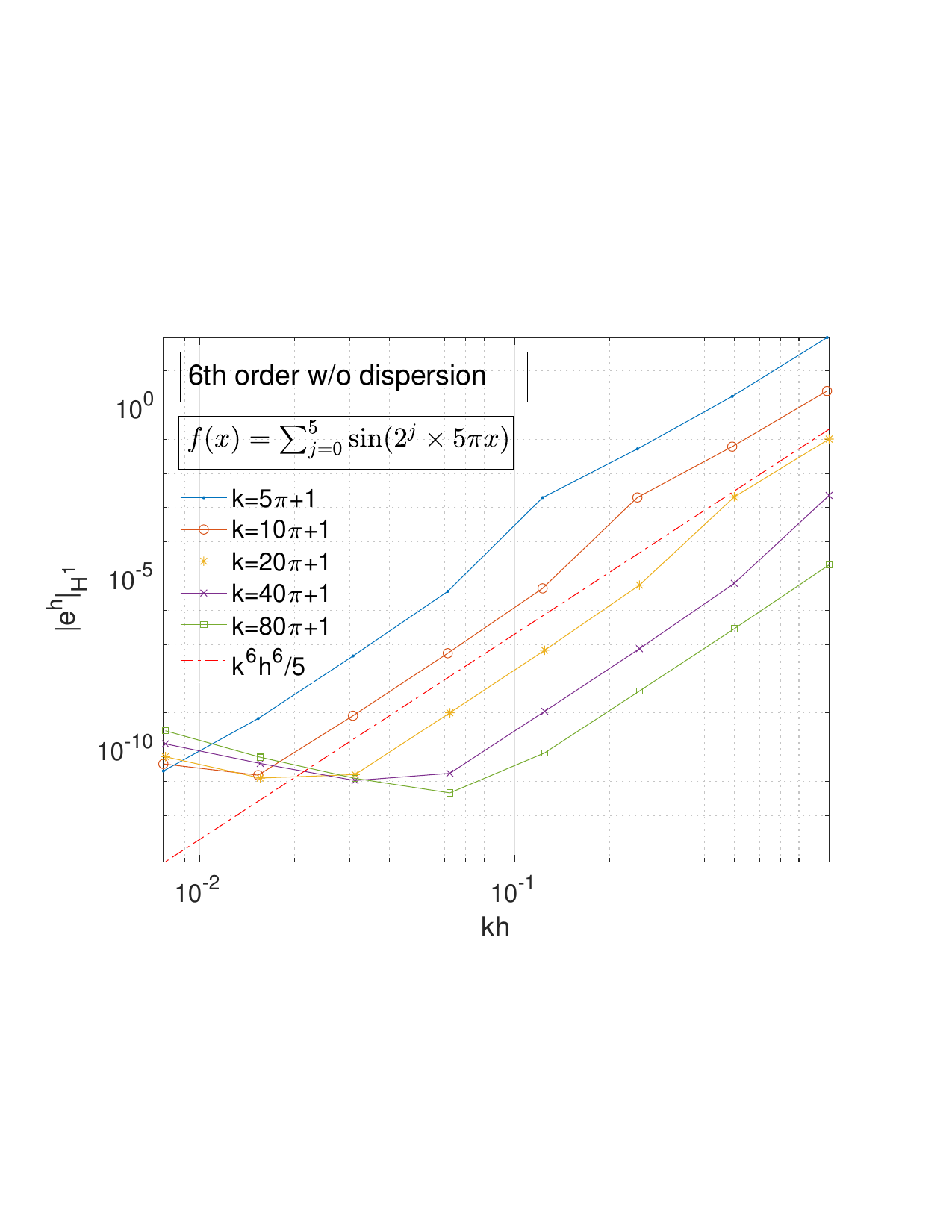}
  \caption{Numerical errors in 1D: monochromatic $k$-dependent source (col.1-2), mixed and fixed
    source (col.3-4).}\label{1dnum}
\end{figure}

% \section{Discussion.}

% We have illstrated a new analysis tool for evaluating the accuracy of FDMs for the Helmholtz
% equation. It can be used as a framework for rigorous proofs and applicable in 2D and 3D (to appear
% elsewhere).

\bibliographystyle{plain}
\bibliography{fdmanalref.bib}

\begin{thebibliography}{1}

\bibitem{babuska1997pollution}
Ivo~M Babuska and Stefan~A Sauter.
\newblock Is the pollution effect of the {FEM} avoidable for the {H}elmholtz
  equation considering high wave numbers?
\newblock {\em SIAM Journal on Numerical Analysis}, 34(6):2392--2423, 1997.

\bibitem{cocquet2021closed}
Pierre-Henri Cocquet, Martin~J Gander, and Xueshuang Xiang.
\newblock Closed form dispersion corrections including a real shifted
  wavenumber for finite difference discretizations of 2D constant coefficient
  {H}elmholtz problems.
\newblock {\em SIAM Journal on Scientific Computing}, 43(1):A278--A308, 2021.

\bibitem{deraemaeker1999dispersion}
Arnaud Deraemaeker, Ivo Babu{\v{s}}ka, and Philippe Bouillard.
\newblock Dispersion and pollution of the {FEM} solution for the {H}elmholtz
  equation in one, two and three dimensions.
\newblock {\em International Journal for Numerical Methods in Engineering},
  46(4):471--499, 1999.

\bibitem{dwarka2021pollution}
Vandana Dwarka and Cornelis Vuik.
\newblock Pollution and accuracy of solutions of the {H}elmholtz equation: A
  novel perspective from the eigenvalues.
\newblock {\em Journal of Computational and Applied Mathematics}, 395:113549,
  2021.

\bibitem{fu2008compact}
Yiping Fu.
\newblock Compact fourth-order finite difference schemes for {H}elmholtz
  equation with high wave numbers.
\newblock {\em Journal of Computational Mathematics}, pages 98--111, 2008.

\bibitem{stolk2016dispersion}
Christiaan~C Stolk.
\newblock A dispersion minimizing scheme for the 3-{D} {H}elmholtz equation
  based on ray theory.
\newblock {\em Journal of Computational Physics}, 314:618--646, 2016.

\bibitem{wang2014pollution}
Kun Wang and Yau~Shu Wong.
\newblock Pollution-free finite difference schemes for non-homogeneous
  {H}elmholtz equation.
\newblock {\em International Journal of Numerical Analysis and Modeling},
  11(4), 2014.

\bibitem{wu2017dispersion}
Tingting Wu.
\newblock A dispersion minimizing compact finite difference scheme for the 2{D}
  {H}elmholtz equation.
\newblock {\em Journal of Computational and Applied Mathematics}, 311:497--512,
  2017.

\end{thebibliography}

\end{document}